\documentclass[11pt]{article}
\usepackage{amsfonts}
\usepackage{amsthm}
\input{epsf.sty}
\usepackage{latexsym,amsmath,amsfonts,amscd}
\usepackage{epsfig}
\usepackage{changebar}
\usepackage{pstricks}
\usepackage{pst-plot}
\usepackage{multirow}
\usepackage{subfigure}
\usepackage{placeins}
\usepackage{amssymb}
\usepackage{mathrsfs}
\usepackage[title]{appendix}
\usepackage{bm}
\usepackage{hyperref}
\usepackage{geometry}
\usepackage{tikz} %
\geometry{left=1.0in, right=1.0in, top=1.0in, bottom=1.0in}

\usepackage{listings}
\usepackage{color}

\definecolor{dkgreen}{rgb}{0,0.6,0}
\definecolor{gray}{rgb}{0.5,0.5,0.5}
\definecolor{mauve}{rgb}{0.58,0,0.82}

\lstset{frame=tb,
  language=C++,
  aboveskip=3mm,
  belowskip=3mm,
  showstringspaces=false,
  columns=flexible,
  basicstyle={\small\ttfamily},
  numbers=none,
  numberstyle=\tiny\color{gray},
  keywordstyle=\color{blue},
  commentstyle=\color{dkgreen},
  stringstyle=\color{mauve},
  breaklines=true,
  breakatwhitespace=true,
  tabsize=3
}

\usepackage{multirow,bigstrut}
\usepackage{enumitem}
\usepackage{booktabs}
\usepackage{listings}

\numberwithin{equation}{section}

\theoremstyle{plain}
\newtheorem{thm}{Theorem}[section]

\newtheorem{prop}[thm]{Proposition}

\theoremstyle{definition}

\newcommand{\brac}[1]{\left(#1\right)}

\def\half{\frac 1 2}

\newcommand{\bk}{\bm{k}}
\newcommand{\bx}{\bm{x}}

\newfont{\iams}{msbm9}

\newcommand{\commentbis}[1]{}
\newcommand{\be}{\begin{eqnarray}}
\newcommand{\ee}{\end{eqnarray}}
\newcommand{\beno}{\begin{eqnarray*}}
\newcommand{\eeno}{\end{eqnarray*}}

\newcommand{\barr}[1]{\begin{array}{#1}}
\newcommand{\earr}{\end{array}}

\newcommand{\beq}{\begin{equation}}
\newcommand{\eeq}{\end{equation}}
\newcommand{\beqa}{\begin{eqnarray}}
\newcommand{\eeqa}{\end{eqnarray}}

\newcommand{\bv}{{\bf v}}

\newcommand{\bV}{{\bf V}}

\newcommand{\bzero}{{\bm{0}}}
\newcommand{\bone}{{\bm{1}}}
\newcommand{\bl}{{\bm{l}}}
\newcommand{\bi}{{\bm{i}}}

\newcommand{\bj}{{\bm{j}}}
\newcommand{\bW}{{\bm{W}}}

\newcommand{\ba}{{\bm{a}}}
\newcommand{\bal}{{\bm{\alpha}}}
\newcommand{\bb}{{\bm{\beta}}}

\allowdisplaybreaks[4]

\title{Adaptive sparse grid discontinuous Galerkin method: review and software implementation}
\author{
Juntao Huang
\thanks{Department of Mathematics and Statistics, Texas Tech University, Lubbock, TX 79409 U.S.A.
{\tt juntao.huang@ttu.edu}.}
\and
Wei Guo
\thanks{Department of Mathematics and Statistics, Texas Tech University, Lubbock, TX, 70409 U.S.A. 
{\tt weimath.guo@ttu.edu}.
Research is supported by NSF grant DMS-2111383, Air Force Office of Scientific Research FA9550-18-1-0257.}
\and
Yingda Cheng
\thanks{Department of Mathematics, Department of  Computational Mathematics, Science and Engineering, Michigan State University,
East Lansing, MI 48824 U.S.A.
 {\tt ycheng@msu.edu}. Research is supported by NSF grant DMS-2011838.}
}

\begin{document}
\baselineskip=1.6pc
\maketitle

\begin{abstract}
This paper reviews the adaptive sparse grid discontinuous Galerkin (aSG-DG) method for computing high dimensional partial differential equations (PDEs) and its software implementation. The C\texttt{++} software package called AdaM-DG, implementing the aSG-DG method, is available on Github at \url{https://github.com/JuntaoHuang/adaptive-multiresolution-DG}. The package is capable of treating a large class of high dimensional linear and nonlinear PDEs. We review the essential components    of the algorithm and the functionality of the software, including the multiwavelets used, assembling of bilinear operators, fast matrix-vector product for data with hierarchical structures. We further demonstrate the performance of the package by reporting numerical error and CPU cost for several benchmark test, including linear transport equations, wave equations and Hamilton-Jacobi equations.
\end{abstract}

\bigskip

\bigskip

{\bf Key Words: } Adaptive sparse grid, discontinuous Galerkin, high dimensional partial differential equation, software development

\pagenumbering{arabic}
\newpage

\section{Introduction}\label{sec:intro}
\setcounter{equation}{0}
\setcounter{figure}{0}
\setcounter{table}{0}

In recent years, we initiated a line of research to develop adaptive sparse grid discontinuous Galerkin (aSG-DG) method for computing high dimensional partial differential equations (PDEs). This paper serves as a review of the fundamental philosophy behind the algorithm, and more importantly its numerical implementation.  

It is well known that any grid based solver for  high dimensional PDEs suffers from the  \emph{curse of dimensionality} \cite{bellman1961adaptive}. This term refers to the fact that the computational degree of freedom (DOF)   scale as  $O(h^{-d})$ for a $d$-dimensional problem, where   $h$ denotes the mesh size in one coordinate direction, and  for a grid based method (e.g. finite difference or finite element method) with a fixed order of accuracy $k,$ this means the dependence of error on the DOF scales as  $O(\textrm{DOF}^{-k/d}).$ As such, when $d \rightarrow \infty,$ the exponent goes to zero regardless of $k,$ and this means the numerical solution will be inaccurate due to the limited computational resources.
To break the curse of dimensionality, there are several possible approaches. One is to use a probabilisitic type method, such as the Monte Carlo algorithms. The drawback of this approach is the loss of accuracy due to the inherent statistical noise. Another approach, which is the one we are taking, is the sparse grid method \cite{bungartz2004sparse, garcke2013sparse},  introduced by Zenger \cite{zenger1991sparse}. The idea relies on a tensor product hierarchical basis representation, which can reduce the degrees of freedom   from   {$O(h^{-d})$ to $O(h^{-1}|\log_{2}h|^{d-1})$ for $d$-dimensional problems without compromising much accuracy.}  This method is very suitable for moderately high dimensional problems, offering a balance between accuracy and computational cost, see  \cite{griebel2005sparse} for a review.  %

Our work is focused on using sparse grid techniques to solve high dimensional PDEs.
Sparse grid   finite element methods \cite{zenger1991sparse, bungartz2004sparse,schwab2008sparse} and spectral methods \cite{griebel2007sparse, gradinaru2007fourier,shen2010sparse,shen2010efficient} are the most well-developed   sparse grid PDE solvers.   Our research, on the other hand,  is inspired by the distinctive advantages of DG method for transport dominated problems, and with the sparse grid technique,   our ultimate goal is the efficient computations of high-dimensional transport dominated problems such as kinetic equations and   Hamilton-Jacobi equations. We start by developing sparse grid DG method for elliptic, linear transport and kinetic problems in \cite{wang2016elliptic, guo2016transport}.
We then developed the adaptive version: the aSG-DG method in \cite{guo2017adaptive}. In \cite{tao2019collocation}, we developed new interpolatory multiwavelets for piecewise polynomial spaces, and used those multiwavelets to compute hyperbolic conservation laws \cite{huang2019adaptive},  wave equations \cite{huang2020adaptive}, and nonlinear dispersive equations \cite{tao2022adaptive, huang2022class}.  Since the underlying mechanism of the aSG method is  multiresolution analysis, we  also call aSG-DG method the adaptive multiresolution DG method, hence the name AdaM-DG (Adaptive Multiresolution DG) as the name of our package.

While developing and analyzing algorithms are important, we feel that software implementation is also crucial in this project. Reducing the DOF is one thing, the success of the reduction in CPU cost and memory is another.
Here, we outline several challenges facing the efficient implementation of the method. First, the aSG-DG schemes rely on  computations using non-local basis functions (the multiwavelets), this means the standard element-wise DG implementation is no longer feasible. One has to formulate and compute the scheme in the global sense. Second, the multiwavelets are hierarchical. This  hierarchical structure  induces ``orthogonality" in some sense, and it has to be exploited for a fast computation. A prominent example is the fast wavelet transform, which incurs linear cost with respect to the DOF. Third, the implementation has to be adaptive. It is well known that the smoothness requirement of the sparse grid is stringent. This means that the  software implementation should be designed with adaptivity in mind. 

To address the aforementioned challenges, we developed the numerical methods and the software AdaM-DG (available at \url{https://github.com/JuntaoHuang/adaptive-multiresolution-DG}) with several key features. Our method uses two sets of multiwavelets: Alpert's multiwavelets \cite{alpert1993}, which are $L^2$ orthonormal and a class of interpolatory wavelets \cite{tao2019collocation}   for variable coefficients and general nonlinear problems. Many key algorithms rely on fast matrix-vector product exploring the mesh level hierarchy. The Hash table is the underlying fundamental data structures serving the purpose of adaptivity. The code is written with a high level abstraction under 
a uniform treatment for different dimension numbers and encompasses a  universal framework for various equations with different weak formulations. The current package has been used to compute nonlinear hyperbolic conservation laws \cite{huang2019adaptive}, wave equations \cite{huang2020adaptive}, nonlinear Schr\"{o}dinger equations \cite{tao2022adaptive}, Hamilton-Jacobi equations \cite{guo2021adaptive} and nonlinear dispersive equations \cite{huang2022class}, although generalizing this computational module to other applications is achievable with a reasonable modification of the code on the high level.

There are several other sparse grid packages available on the market, mostly for high dimensional function interpolation and integration. For computing high dimensional PDEs, there are two other main packages.
SG\texttt{++} \cite{pflueger10spatially}  is a universal toolbox for spatially adaptive sparse grid methods and the sparse grid combination technique. It provides various low-level and high-level sparse grid functionality allowing one to start using sparse grids with minimal initial implementation effort. The functionality include function interpolation, quadrature, numerical solver for PDEs, data mining and machine learning, and uncertainty quantification. In terms of PDE solvers, it supports the elliptic and parabolic equations using a finite element approach. 
ASGarD (Adaptive Sparse Grid Discretization) \cite{ornlgithub} is a package implementing the adaptive sparse grid DG method  with efficient parallel implementations for both CPU (using OpenMP) and GPU (using CUDA). It was applied to Maxwell equation \cite{d2020discontinuous}, linear advection equation, diffusion equation and advection-diffusion equations, e.g. Fokker-Planck equations. This package only considers linear equations and does not support  the interpolatory multiwavelets and the associated fast algorithms. We also mention  \cite{atanasov2017sparse},  where an   open source Julia library implementing the sparse grid DG method was developed and applied to scalar linear wave equations in high dimensions.

The goal of this paper is to illustrate the main components of the algorithm and the software using concrete examples. Throughout the paper, we will introduce and review the concepts used in the method first, then provide  short descriptions of the associated implementations in the AdaM-DG  package. 
With this in mind, the rest of the paper is organized as follows. In Section \ref{sec:basis}, we review the fundamentals of Alpert's and interpolatory
multiwavelets, their   implementation and the adaptive procedure. In Section \ref{sec:operator}, we discuss the operators used to assemble a PDE solver, paying particular attention to the fast algorithm.
Section \ref{sec:example}, we provide details of how to solve three types of PDEs using the package, and provide benchmark results with CPU time. 
Section \ref{sec:conclusion} concludes the paper by discussing the current status of the software with future improvment.

\section{Multiwavelets and adaptivity}
\label{sec:basis}

The building blocks of the aSG-DG method are the multiwavelet basis functions and the associated adaptive   procedures. In this section, we will review the two types of multiwavelet basis functions used and their implementations with data structure. We will also go over the details of the adaptive refining and coarsening procedures illustrated by code blocks.

\subsection{Multiwavelets in 1D}

We use two types of multiwavelet bases.  
We will start by reviewing the construction of Alpert's multiwavelet basis functions \cite{alpert1993} on the unit interval $I = [0,1]$. We define a set of nested grids, where the $n$-th level grid $\Omega_n$ consists of $2^n$ uniform cells
\begin{equation*}
  I_{n}^j=(2^{-n}j, 2^{-n}(j+1)], \quad j=0, \ldots, 2^n-1
\end{equation*}
for $n \ge 0.$ %
The usual piecewise polynomial space of degree at most $k\ge1$ on the $n$-th level grid $\Omega_n$ for $n\ge 0$ is denoted by
\begin{equation}\label{eq:DG-space-Vn}
V_n^k:=\{v: v \in P^k(I_{n}^j),\, \forall \,j=0, \ldots, 2^n-1\}.
\end{equation}
Then, we have the nested structure
$$V_0^k \subset V_1^k \subset V_2^k \subset V_3^k \subset  \cdots$$
We can now define the multiwavelet subspace $W_n^k$, $n=1, 2, \ldots $ as the orthogonal complement of $V_{n-1}^k$ in $V_{n}^k$ with respect to the $L^2$ inner product on $[0,1]$, i.e.,
\begin{equation*}
V_{n-1}^k \oplus W_n^k=V_{n}^k, \quad W_n^k \perp V_{n-1}^k.
\end{equation*}
For notational convenience, we let
$W_0^k:=V_0^k$, which is the standard polynomial space of degree up to $k$ on $[0,1]$. Therefore, we have $V_n^k=\bigoplus_{0 \leq l \leq n} W_l^k$.

Now we  define a set of orthonormal basis associated with the space $W_l^k$. The case of mesh level $l=0$ is trivial: we use the normalized shifted Legendre polynomials in $[0,1]$ and denote the basis by $v^0_{i,0}(x)$ for $i=0,\ldots,k$.
When $l>0$, the orthonormal bases in $W_l^k$ are presented in \cite{alpert1993} and denoted by 
$$v^j_{i,l}(x),\quad i=0,\ldots,k,\quad j=0,\ldots,2^{l-1}-1,$$
where the index $l$ denotes the mesh level, $j$ denotes the location of the element, and $i$ is the index for polynomial degrees.
Note that Alpert's multiwavelets are orthonomal, i.e.,
$
\int_0^1 v^j_{i,l}(x)v^{j'}_{i',l'}(x)\,dx=\delta_{ii'}\delta_{ii'}\delta_{jj'}.
$

The second class of basis functions are the interpolatory multiwavlets introduced in \cite{tao2019collocation}. 
Denote the set of interpolation points in the interval $I=[0,1]$ at mesh level 0 by $X_0 = \{ x_i \}_{i=0}^P\subset I$. Here, we assume the number of points in $X_0$ is $(P+1)$. Then the interpolation points at mesh level $n\ge1$, $X_n$ can be obtained correspondingly as
\begin{equation*}
    X_n = \{ x_{i,n}^j := 2^{-n}(x_i+j), \quad i=0,\dots,P, \quad j=0,\dots,2^{n}-1 \}.
\end{equation*}
We require the points to be nested, i.e. 
\begin{equation}
\label{nestpts}
    X_0 \subset X_1 \subset X_2 \subset X_3 \subset \cdots.
\end{equation}

Given the interpolation points, we define the basis functions on the $0$-th level grid as  Lagrange $(K=0)$ or Hermite $(K\ge1)$ interpolation polynomials of degree $\le M:=(P+1)(K+1)-1$ which satisfy the property:
\begin{equation*}
    \phi_{i,l}^{(l')}(x_{i'}) = \delta_{ii'}\delta_{ll'},
\end{equation*}
for $ i,i'=0,\dots,P$ and $l,l'=0,\dots,K$. Here and afterwards, the superscript $(l')$ denotes the $l'$-th order derivative. It is easy to see that
$
\textrm{span} \{ \phi_{i,l}, \quad i=0,\dots,P, \quad l=0,\dots,K \}=V_0^M.  
$
With the basis function at mesh level 0, we can define basis function at mesh level $n\ge1$:
\begin{equation*}
  \phi_{i,l,n}^j(x) :=2^{-nl}\phi_{i,l}(2^nx-j), \quad i=0,\dots,P, \quad l=0,\dots,K, \quad j=0,\dots,2^n-1 
\end{equation*}
which is a complete basis set for $V_n^M.$

Next, we introduce the hierarchical representations. Define $\tilde{X}_0 := X_0$ and {$\tilde{X}_n := X_n\backslash X_{n-1}$} for $n\ge1$, then we have the decomposition
\begin{equation*}
    X_n = \tilde{X}_0 \cup \tilde{X}_1 \cup \cdots \cup \tilde{X}_n .
\end{equation*}
Denote the points in $\tilde{X}_1$ by $\tilde{X}_1=\{ \tilde{x}_i \}_{i=0}^P$. Then the points in $\tilde{X}_n$ for $n\ge1$ can be represented by
\begin{equation*}
    \tilde{X}_n = \{ \tilde{x}_{i,n}^j:=2^{-(n-1)}(\tilde{x}_i+j), \quad  i=0,\dots,P, \quad j=0,\dots,2^{n-1}-1 \}.
\end{equation*}

For notational convenience, we let $\tilde{W}_0^M:=V_0^M.$ The increment function space $\tilde{W}_n^M$ for $n\ge1$ is introduced as a function space    that satisfies
\begin{equation}\label{eq:func-space-sum}
    V_n^M = V_{n-1}^M \oplus \tilde{W}_n^M,
\end{equation}
and is defined through the multiwavelets
 $\psi_{i,l} \in V_1^M$ that satisfies
\begin{equation*}
    \psi_{i,l}^{(l')}(x_{i'}) = 0, \quad \psi_{i,l}^{(l')}(\tilde{x}_{i'}) = \delta_{i,i'}\delta_{l,l'},
\end{equation*}
for $i,i'=0,\dots,P$ and $l,l'=0,\dots,K$. Then  $\tilde{W}_n^M$ is given by
\begin{equation*}
    \tilde{W}_n^M = \textrm{span} \{ \psi_{i,l,n}^j := 2^{-(n-1)l}\psi_{i,l}(2^{n-1}x-j), \quad i = 0,\dots,P, \, l=0,\dots,K, \, j=0,\dots,2^{n-1}-1 \}
\end{equation*}
The explicit expression of the interpolatory multiwavelets basis functions can be found in \cite{tao2019collocation,huang2020adaptive}. 
The algorithm converting between the point values and the derivatives at the interpolation points to hierarchical coefficients   is given in \cite{tao2019collocation}, and by a standard  argument in fast wavelet transform, can be performed with linear complexity.

In the software implementation, we have a base class \verb|Basis| which is determined by three indices \verb|level| $n$, \verb|suppt| $j$ and \verb|dgree| $p$. This is the base class denoting the basis functions in 1D. The  three classes \verb|AlptBasis|, \verb|LagrBasis| and \verb|HermBasis| are inherited from \verb|Basis|, which denote the Alpert's, Lagrange interpolotary and Hermite interpolotary multiwavelets, respectively. The values and the derivatives of the basis functions can be computed through the member functions in the class.
All the basis functions in 1D are collected in the template class \verb|template <class T> AllBasis| in a given order. The key member variable in this class is \verb|std::vector<T> allbasis|, which is composed of all the basis functions, with \verb|T| being \verb|AlptBasis|, \verb|LagrBasis| or \verb|HermBasis|. The total number of basis functions (i.e. the size of \verb|allbasis|) is $(k+1)2^n$ with $k$ being the maximum polynomial degree and $n$ being the maximum mesh level in the computation.

\subsection{Multiwavelets in multidimensions}

Multidimensional case when $d>1$ follows from a tensor-product approach. First we recall some basic notations to facilitate the discussion. For a multi-index $\mathbf{\alpha}=(\alpha_1,\cdots,\alpha_d)\in\mathbb{N}_0^d$, where $\mathbb{N}_0$  denotes the set of nonnegative integers, the $l^1$ and $l^\infty$ norms are defined as 
$
|\bal|_1:=\sum_{m=1}^d \alpha_m, \qquad   |\bal|_\infty:=\max_{1\leq m \leq d} \alpha_m.
$
The component-wise arithmetic operations and relational operations are defined as
$$
\bal \cdot \bb :=(\alpha_1 \beta_1, \ldots, \alpha_d \beta_d), \qquad c \cdot \bal:=(c \alpha_1, \ldots, c \alpha_d), \qquad 2^\bal:=(2^{\alpha_1}, \ldots, 2^{\alpha_d}),
$$
$$
\bal \leq \bb \Leftrightarrow \alpha_m \leq \beta_m, \, \forall m,\quad
\bal<\bb \Leftrightarrow \bal \leq \bb \textrm{  and  } \bal \neq \bb.
$$

By making use of the multi-index notation, we denote by $\bl=(l_1,\cdots,l_d)\in\mathbb{N}_0^d$ the mesh level in a multivariate sense. We  define the tensor-product mesh grid $\Omega_\bl=\Omega_{l_1}\otimes\cdots\otimes\Omega_{l_d}$ and the corresponding mesh size $h_\bl=(h_{l_1},\cdots,h_{l_d}).$ Based on the grid $\Omega_\bl$, we denote  $I_\bl^\bj=\{\bx:x_m\in(h_mj_m,h_m(j_{m}+1)),m=1,\cdots,d\}$ as an elementary cell, and 
$$\bV_\bl^k:=\{{ \bv \in Q^k(I^{\bj}_{\bl})}, \,\,  \bzero \leq \bj  \leq 2^{\bl}-\bone \}= V_{l_1,x_1}^k\times\cdots\times  V_{l_d,x_d}^k$$
as the tensor-product piecewise polynomial space, where $Q^k(I^{\bj}_{\bl})$ represents the collection of polynomials of degree up to $k$ in each dimension on cell $I^{\bj}_{\bl}$. 
If we use equal mesh refinement of size $h_N=2^{-N}$ in each coordinate direction, the  grid and space will be denoted by $\Omega_N$ and $\bV_N^k$, respectively.  

For both the Alpert's and the interpolatory multiwavelets, we can define their   multidimensional version. For example, the space corresponding to Alpert's bases is 
$$\bW_\bl^k=W_{l_1,x_1}^k\times\cdots\times  W_{l_d,x_d}^k.$$
We can see that 
$
\bV_N^k=\bigoplus_{\substack{ |\bl|_\infty \leq N\\\bl \in \mathbb{N}_0^d}} \bW_\bl^k,
$
while the standard sparse grid approximation space   is
\begin{equation}
\label{eq:hiere_sg}
\hat{\bV}_N^k:=\bigoplus_{\substack{ |\bl|_1 \leq N\\\bl \in \mathbb{N}_0^d}}\bW_\bl^k \subset \bV_N^k.
\end{equation}
The dimension of $\hat{\bV}_N^k$ scales as $O((k+1)^d2^NN^{d-1})$ \cite{wang2016elliptic}, which is significantly less than that of $\bV_N^k$ with exponential dependence on $Nd$, hence the name ``sparse grid".  For the more general case, when we use adaptivity, the index choice for active elements is denoted by $(\bl,\bj)\in H,$ where the set $H$ will be determined adaptively according to some specified criteria.

Basis functions in multidimensions are also defined by tensor products. For example, for Alpert's multiwavelets,
\begin{equation}\label{eq:multidim-basis}
v^\bj_{\bi,\bl}(\bx) := \prod_{m=1}^d v^{j_m}_{i_m,l_m}(x_m),
\end{equation}
for $\bl \in \mathbb{N}_0^d$, $\bj \in B_\bl := \{\bj\in\mathbb{N}_0^d: \,\mathbf{0}\leq\bj\leq\max(2^{\bl-\mathbf{1}}-\mathbf{1},\mathbf{0}) \}$ and $\mathbf{1}\leq\bi\leq \bk+\mathbf{1}$. 
In our software, a general expression of the numerical solution $u_h$ is represented as  
\begin{equation}
    u_h(\bm, t) 
    = \sum_{\substack{(\bl,\bj)\in H, \\\mathbf{1}\leq\bi\leq \bk+\mathbf{1}}} c_{\bi, \bl}^{\bj}(t) v_{\bi, \bl}^{\bj}(\bx). 
\end{equation}
 $u_h$ is stored via a class hierarchy. First, at the lowest level, \verb|template <class T> VecMultiD| is constructed to store data, which can deal with tensors in any dimensions under a unified framework.
The class \verb|Element| stores  data related to each element  $(\bl,\bj)$ including the coefficients of Alpert's basis functions and interpolation basis functions. For example, the data member 
\verb|std::vector<VecMultiD<double>> Element::ucoe_alpt| stores the coefficients $c^\bj_{\bi,\bl}$ of Alpert's basis with \verb|VecMultiD<double>|, and the index of this \verb|std::vector| is to denote the unknown variables: for a scalar equation, the size of this vector is 1; for a system of equations, the size of this vector is the number of unknown variables. \verb|ucoe_alpt[s]| is a \verb|VecMultiD| of dimension $d$ and has the total number of DoF $(k+1)^d$, which corresponds to the $s$-th unknown variable in the system. Last, we have class \verb|DGSolution| at the top level to organize  the entire DG solution. The most important data member in this class is \verb|std::unordered_map<int, Element> dg|, which stores all the active elements $H$ and the associated Hash keys. Here, we use the C\verb|++| container \verb|std::unordered_map| that store elements formed by combing a key value (the hash key is an \verb|int| determined by the mesh level and the support index of the element) and a content value (\verb|Element|), allowing for fast retrieval of individual elements based on their keys.

\subsection{Adaptivity}

To realize the adaptivity of the scheme, we implement class \verb|DGAdapt|, which is derived from \verb|DGSolution|. There are two constants prescribed by the user for fine tuning adaptivity, namely the refinement threshold $\varepsilon$ (\verb|DGAdapt::eps|) and the coarsen threshold $\eta$ (\verb|DGAdapt::eta|). In the computation, we usually take $\eta = 0.1 \varepsilon.$ The adaptive procedure relies on two key member functions  \verb|DGAdapt::refine()| and \verb|DGAdapt::coarsen()|. Here, we provide the source code of \verb|DGAdapt::refine()| to illustrate the algorithm. 

\begin{lstlisting}
void DGAdapt::refine()
{
  // before refine, set new_add variable to be false in all elements
  set_all_new_add_false();
  for (auto & iter : leaf)
  {
    if (indicator_norm(*(iter.second)) > eps) // l2 norm
    {
      // loop over all its children
      const std::set<std::array<std::vector<int>,2>> index_chd_elem = 
      index_all_chd(iter.second->level, iter.second->suppt);
      for (auto const & index : index_chd_elem)
      {
        int hash_key = hash.hash_key(index);
        // if index is not in current child index set, then add it to dg solution
        if ( iter.second->hash_ptr_chd.find(hash_key) == iter.second->hash_ptr_chd.end())
        {
          Element elem(index[0], index[1], all_bas, hash);
          add_elem(elem);
        }                
      }
    }
  }
  check_hole();
  update_leaf();
  update_leaf_zero_child();
  update_order_all_basis_in_dgmap();
}
\end{lstlisting}

The concept of child and parent elements based on the hierarchical structure is defined as follows. If an element $(\bl',\bj')$ satisfies the conditions that: (1) there exists an integer $m$ such that $1\le m \le d$ and $\bl' = \bl+\bm{e}_m$, where $\bm{e}_m$ denotes the unit vector in the $m$-direction, and the support of $(\bl',\bj')$ is within the support of $(\bl,\bj)$; (2) $\|\bl'\|_\infty\le N$, then  it  is  called  a  child  element  of $(\bl,\bj)$. Accordingly, element $(\bl,\bj)$ is called a parent element of $(\bl',\bj')$. In \verb|DGAdapt|, member function \verb|index_all_chd(level,suppts)| returns a \verb|set| of indices $(\bl',\bj')$ of all the child elements of the input element with index \verb|(level,suppts)|, i.e., $(\bl,\bj)$. An element is called a leaf element if the number of its child elements  in \verb|dg| is less the maximum number of child elements it can have, i.e., at least one of its child elements are not included in \verb|dg|. \verb|DGAdapt| has data member \verb|std::unordered_map<int, Element*> leaf| that organizes the leaf elements of \verb|dg|. In \verb|DGAdapt::refine()|, we set variable \verb|new_add| to be false for all elements. Then we traverse the unordered map \verb|leaf| and compute the $\ell^2$ norm of Alpert wavelet coefficients $\left(\sum_{\mathbf{1} \leq \mathbf{i} \leq \mathbf{k}+\mathbf{1}}\left|c_{\bi, \bl}^{\bj}\right|^2\right)^{\frac{1}{2}}$ for each element  as the error indicator.  If it is larger than \verb|eps|, then we refine the element by adding all its child elements to \verb|dg|. The coefficients of the newly added elements are set to zero. Then \verb|check_hole()| is called to ensure that  all the parent elements of the newly added elements are in \verb|dg| (i.e., no “hole” is allowed). \verb|update_leaf()| will update map \verb|leaf|. Furthermore,   \verb|DGAdapt| has another data member \verb|std::unordered_map<int, Element*> leaf_zero_child|, which is a subset of \verb|leaf| and organizes the leaf elements with zero child elements. It plays a critical role in  function \verb|DGAdapt::coarsen()|. \verb|update_leaf_zero_child()| is called to update map \verb|leaf_zero_child|. Last, function \verb|update_order_all_basis_in_dgmap()| will update the ordering of all the basis in \verb|dg|, which will be used when assembling the operators. 
 
We also provide the source code of \verb|DGAdapt::coarsen()| to illustrate the coarsening algorithm.
\begin{lstlisting}	
void DGAdapt::coarsen()
{
  leaf.clear();
  update_leaf_zero_child();    
  coarsen_no_leaf();
  update_leaf();
  update_order_all_basis_in_dgmap();
}
\end{lstlisting}
In this routine, first we clear the map \verb|leaf| and update \verb|leaf_zero_child|. Then, we coarsen \verb|dg| based on map \verb|leaf_zero_child| by recursively calling member function \verb|coarsen_no_leaf()|. In particular, similar to the refining procedure, we traverse the map \verb|leaf_zero_child| and compute the $\ell^2$ norm of Alpert wavelet coefficients for each element. If it is less than \verb|eta|, then the element is removed from \verb|dg|. The coarsening procedure is repeatedly performed until no element can be removed. Lastly, we call \verb|update_leaf()| and \verb|update_order_all_basis_in_dgmap()| to update map \verb|leaf| and ordering of basis in the coarsened   \verb|dg|.

Now we have defined the basis class and the fundamental function approximation module. We can use it in any place when function approximations are needed, for example,  to initialize the PDE solution.
In the simple case when the initial condition $u_0=u_0(x_1, \cdots, x_d)$ is separable, i.e. $
	u_0(x_1, \cdots, x_d) = \prod_{i=1}^d g_i(x_i)$
or when it is a sum of several separable functions
$
	u_0(x_1, \cdots, x_d) = \sum_{k=1}^m \brac{\prod_{i=1}^d g_{ki}(x_i)},
$
we can first project each 1D function $g_i$ or $g_{ki}$ using numerical quadratures and then compute the coefficients of the basis functions in multidimensions by a tensor product. This is implement in the member functions \verb|DGAdapt::init_separable_scalar()| and \verb|DGAdapt::init_separable_scalar_sum()| for the scalar case and \verb|DGAdapt::init_separable_system()| and \verb|DGAdapt::init_separable_system_sum()| for the system case.
If the initial condition cannot be written in the separable form,   we will use the adaptive interpolation procedure by calling functions \verb|DGAdaptIntp::init_adaptive_intp_Lag()| or \verb|DGAdaptIntp::init_adaptive_intp_Herm()| corresponding to the Lagrange and Hermite interpolation. Then we perform the transformation to  the Alperts' multiwavelets by calling the function \verb|FastLagrInit::eval_ucoe_Alpt_Lagr()| with the fast matrix-vector multiplication in Section \ref{sec:fast}.  This is the main purpose of the class \verb|DGAdaptIntp|, which is derived from \verb|DGAdapt|.

\section{Basic operators}
\label{sec:operator}

In this section, we show the details of  implementing DG weak formulations in our package. We first introduce the operator matrix which stores the volume and interface interactions between all the basis functions in 1D. With this in hand, we can easily assemble the matrix for linear DG differential operators in arbitrary dimensions by recognizing the orthogonality of the Alpert's multiwavelets. Then, we introduce a fast matrix-vector multiplication algorithm, which play a critical role for computational savings when we perform the basis transformation. We also show the interpolation technique  with interpolatory multiwavelets for dealing with nonlinear terms together with the computation of bilinear forms. Last, we describe the  ODE solvers implmented in this package. The operator class described in this section as well as the adaptive procedure discussed in the previous section are essential steps for the adaptive evolution procedures in the algorithm.

\subsection{Operator matrix in 1D}

Unlike the standard DG method, for which each element can only interact with itself and its immediate neighbors, the interaction among multiwavelet basis is much more complicated when assembling DG bilinear forms due to the distinct hierarchical structures. Hence, it is critical to precompute and store the interaction information to save cost. In the package, the template class \verb|OperatorMatrix1D<class U, class V>| will compute and store the volume and interface interactions between all the basis functions in 1D. Here, the basis functions \verb|U| and \verb|V| can be Alpert basis (\verb|AlptBasis|), Lagrange interpolation basis (\verb|LagrBasis|) or Hermite interpolation basis (\verb|HermBasis|). The following code block shows how to use this class:
\begin{lstlisting}
// maximum mesh level
const int NMAX = 8;

// initialize all basis functions in 1D for Alpert, Lagrange and Hermite basis
AllBasis<AlptBasis> all_bas_alpt(NMAX);
AllBasis<LagrBasis> all_bas_lagr(NMAX);

// periodic boundary condition	
std::string boundary_type = "period";

OperatorMatrix1D<AlptBasis,AlptBasis> oper_matx_alpt(all_bas_alpt, all_bas_alpt, boundary_type);
OperatorMatrix1D<LagrBasis,AlptBasis> oper_matx_lagr(all_bas_lagr, all_bas_alpt, boundary_type);
\end{lstlisting}

In this code block, we first declare the maximum mesh level \verb|NMAX| to be 8. Then \verb|all_bas_alpt| and \verb|all_bas_lagr| store the information for all the Alpert basis and Lagrange interpolation basis functions, respectively. %
Under the periodic boundary condition, we compute and store the operators in \verb|oper_matx_alpt| and \verb|oper_matx_lagr|. For example, \verb|oper_matx_alpt.u_v| stores the inner product of all the basis functions in $L^2[0, 1]$. In particular, \verb|oper_matx_alpt.u_v(i,j)| denote the inner product of the $i$-th basis and the $j$-th basis, which forms an identity matrix due to the orthogonality.  \verb|oper_matx_alpt.u_vx(i,j)| denote the inner product of the $i$-th basis and the derivative of the $j$-th basis. We also provide operators involving the interface interactions. For example, \verb|oper_matx_alpt.ulft_vjp| stores $\sum_{i} u_{i+\half}^- [v]_{i+\half}$ where the summation is taken over all the cell interfaces where the basis function $v$ may have discontinuity.
These operator matrices are pre-computed at the beginning of the code. Since it only involves 1D calculation, the computational cost is negligible. %
We list all the operators in 1D in Table \ref{ex:table-operator-matrix-1D}.
\begin{table}[!hbp]
\centering
\caption{All the operators for the basis in 1D.}
\label{ex:table-operator-matrix-1D}
\begin{tabular}{c|c|c|c}
  \hline
 & variables & meaning & usage example \\
  \hline
\multirow{6}{4em}{volume}
& \verb|u_v| & $\int_0^1 uvdx$ & hyperbolic and HJ equations \\ 
& \verb|u_vx| & $\int_0^1 uv'dx$ & hyperbolic and ZK equations \\ 
& \verb|ux_v| & $\int_0^1 u'vdx$ & ZK equation \\ 
& \verb|ux_vx| & $\int_0^1 u'v'dx$ & diffusion and wave equations \\ 
& \verb|u_vxx| & $\int_0^1 uv''dx$ & ZK equations \\ 
& \verb|u_vxxx| & $\int_0^1 uv'''dx$ & KdV and ZK equations \\ 
  \hline
\multirow{10}{4em}{interface}
& \verb|ulft_vjp| & $\sum_{i} u_{i+\half}^- [v]_{i+\half}$ & hyperbolic and HJ equations \\ 
& \verb|urgt_vjp| & $\sum_{i} u_{i+\half}^+ [v]_{i+\half}$ & hyperbolic and HJ equations \\ 
& \verb|uxave_vjp| & $\sum_{i} \{u_x \}_{i+\half} [v]_{i+\half}$ & diffusion and wave equations \\ 
& \verb|ujp_vxave| & $\sum_{i} [u]_{i+\half} \{v_x \}_{i+\half}$ & diffusion and wave equations \\ 
& \verb|ujp_vjp| & $\sum_{i} [u]_{i+\half} [v]_{i+\half}$ & diffusion and wave equations \\ 
& \verb|uxxrgt_vjp| & $\sum_{i} (u_{xx})_{i+\half}^+ [v]_{i+\half}$ & KdV and ZK equations \\ 
& \verb|uxrgt_vxjp| & $\sum_{i} (u_x)_{i+\half}^+ [v_x]_{i+\half}$ & KdV and ZK equations \\ 
& \verb|ulft_vxxjp| & $\sum_{i} {u}_{i+\half}^- [v_{xx}]_{i+\half}$ & KdV and ZK equations \\ 
\hline
\end{tabular}
\end{table}
These operator matrix in 1D will be used in assembling the bilinear form in multidimensions in the DG scheme. We will show further details next.

\subsection{Bilinear form with Alpert multiwavelets}

The class \verb|BilinearFormAlpt| is the base class of assembling the bilinear form with Alpert multiwavelets. The DG bilinear forms resulted from linear equations with constant coefficients are all inherited from this class. To illustrate the main idea, we take the scalar linear hyperbolic equations with constant coefficients in 2D as an example:
\begin{equation}\label{eq:linear-hyperbolic-2D}
	u_t + u_{x_1} + u_{x_2} = 0.
\end{equation}
The corresponding DG scheme in the global formulation for solving \eqref{eq:linear-hyperbolic-2D} is
\begin{equation*}
\begin{split}
	\int_{\Omega} (u_h)_t \phi_h dx_1 dx_2 =& \sum_{i,j} \int_{I_{ij}} (u_h (\phi_h)_{x_1} + u_h (\phi_h)_{x_2}) dx_1 dx_2 - \sum_{i,j}\int_{I_j}(\hat{u_h}_{i+\frac{1}{2},j} (\phi_h)^-_{i+\frac{1}{2},j} - \hat{u_h}_{i-\frac{1}{2},j} (\phi_h)^+_{i-\frac{1}{2},j})dx_2 \\
	&- \sum_{i,j}\int_{I_i}(\hat{u_h}_{i,j+\frac{1}{2}} (\phi_h)^-_{i,j+\frac{1}{2}} - \hat{u_h}_{i,j-\frac{1}{2}} (\phi_h)^+_{i,j-\frac{1}{2}}) dx_1,
\end{split}
\end{equation*}
with the upwind flux:
\begin{equation}
	\hat{u_h}_{i+\frac{1}{2},j} = {u_h}_{i+\frac{1}{2},j}^-, \quad \hat{u_h}_{i,j+\frac{1}{2}} = {u_h}_{i,j+\frac{1}{2}}^-.
\end{equation}
With the periodic boundary condition, we can simplify the scheme:
\begin{equation}\label{eq:dg-linear-simplify}
	\int_{\Omega} (u_h)_t \phi_h dx_1 dx_2 = \sum_{i,j} \int_{I_{ij}} (u_h (\phi_h)_{x_1} + u_h (\phi_h)_{x_2}) dx_1 dx_2 + \sum_{i,j}\int_{I_j} {u_h}^-_{i+\frac{1}{2},j} [\phi_h]_{i+\frac{1}{2},j} dx_2 + \sum_{i,j}\int_{I_i}({u_h}^-_{i,j+\frac{1}{2}} [\phi_h]_{i,j+\frac{1}{2}} ) dx_1.
\end{equation}
with
\begin{equation}
	[\phi_h]_{i+\frac{1}{2},j} := (\phi_h)_{i+\frac{1}{2},j}^+ - (\phi_h)_{i+\frac{1}{2},j}^-, \quad [\phi_h]_{i,j+\frac{1}{2}} := (\phi_h)_{i,j+\frac{1}{2}}^+ - (\phi_h)_{i,j+\frac{1}{2}}^-.
\end{equation}

Next, we present the details on the computation of the volume integral $\sum_{i,j} \int_{I_{ij}} u_h (\phi_h)_{x_1} dx_1 dx_2$ in \eqref{eq:dg-linear-simplify}. The other terms in \eqref{eq:dg-linear-simplify} can be computed in similar ways.
Denote
\begin{equation}\label{eq:vol-integral-linear-simplify}
	B(u, \phi) := \sum_{i,j} \int_{I_{ij}} (u \phi_{x_1}) dx_1 dx_2.
\end{equation}
Our strategy to compute this bilinear form is to compute it for every basis function, i.e., assemble the matrix. After the matrix is assembled, we can directly compute the residual using the matrix-vector multiplication in the linear algebra package \verb|Eigen|.
By taking the solution $u$ and the test function $\phi$ in \eqref{eq:vol-integral-linear-simplify} to be $u = v_{\bi, \bl}^{\bj}(\bx) = v_{i_1, l_1}^{j_1}(x_1) v_{i_2, l_2}^{j_2}(x_2)$ and $\phi = v_{\bi', \bl'}^{\bj'}(\bx) = v_{i_1', l_1'}^{j_1'}(x_1) v_{i_2', l_2'}^{j_2'}(x_2)$, we obtain:
\begin{equation}\label{eq:dg-linear-simplify-RHS}
\begin{split}	
B(u, \phi) ={}& \sum_{i,j} \int_{I_{ij}} (u \phi_{x_1}) dx_1 dx_2 \\
={}& \int_{\Omega} \brac{v^{j_1}_{i_1,l_1}(x_1) v^{j_2}_{i_2,l_2}(x_2)} \frac{d}{dx_1}\brac{ v^{j_1'}_{i_1',l_1'}(x_1) v^{j_2'}_{i_2',l_2'}(x_2) } dx_1 dx_2 \\
={}& \brac{\int_0^1 v^{j_1}_{i_1,l_1}(x_1) \frac{d}{dx_1}( v^{j_1'}_{i_1',l_1'}(x_1)) dx_1} \delta_{i_2,i_2'} \delta_{j_2,j_2'}\delta_{l_2,l_2'} \\
 ={}& ( v^{j_1}_{i_1,l_1}, (v^{j_1'}_{i_1',l_1'})') \delta_{i_2,i_2'} \delta_{j_2,j_2'}\delta_{l_2,l_2'}
\end{split}
\end{equation}
Here, $(\cdot , \cdot)$ denotes the inner product in the unit interval $[0,1]$. From this, we can see that the bilinear form could be nonzero, only if $i_2=i_2'$, $j_2=j_2'$ and $l_2=l_2'$, i.e. the indices of the basis functions are the same in $x_2$-dimension. Moreover, to compute it, only the inner product of the Alpert basis and the derivative of the Alpert basis (i.e. the operator matrix in 1D \verb|oper_matx_alpt.u_vx|) will be used. This indicates that the computation of the matrix only depends on \emph{one} operator matrix in 1D and the specific dimension of the bilinear form. This is also true for equations in higher dimensions. With these properties, the matrix can be fast assembled by the function \verb|void BilinearFormAlpt::assemble_matrix_alpt()|. We note that all the classes to assemble the matrix for the  bilinear forms  are inherited from the base class \verb|BilinearFormAlpt| and call the function \verb|void BilinearFormAlpt::assemble_matrix_alpt()| at the lower level. The current package includes the derived classes \verb|DiffusionAlpt| and \verb|DiffusionZeroDirichletAlpt| for Laplace operator using interior penalty DG method \cite{arnold1982interior} with periodic and zero Dirichlet boundary conditions, and \verb|KdvAlpt|, \verb|ZKAlpt|, \verb|SchrodingerAlpt| for ultra weak DG methods for KdV equations \cite{cheng2008discontinuous}, ZK equations \cite{huang2022class} and Schr\"{o}dinger equations \cite{chen2019ultra}. It is easy to generalize to new weak formulations by following the similar line.

\subsection{Fast algorithm of matrix-vector multiplication}\label{sec:fast}

In this part, we describe the fast algorithm of the matrix-vector multiplication. Since the multiwavelet basis functions are hierarchical, the evaluation of the residual yields denser matrix than those obtained by standard local DG bases, if the interpolatory multiwavelets are applied for the nonlinear terms and no longer have the orthogonality. Efficient implementations are therefore essential to ensure that the computational cost is on par with element-wise implementation of traditional DG schemes. Our work extends the fast matrix-vector multiplication in \cite{shen2010efficient,shen2010sparse} to an adaptive index set.

\subsubsection{2D case}

For simplicity, we first show the main idea in 2D case while it can be easily generalized to arbitrary dimension in the next part. 

Consider the matrix-vector multiplication in this form:
\begin{equation}\label{eq:full-trans-x-y}
    f_{n_1,n_2} = \sum_{0\le n_1',n_2'\le N} f'_{n_1',n_2'} t_{n_1', n_1}^{(1)} t_{n_2',n_2}^{(2)}, \quad 0\le n_1,n_2\le N.
\end{equation}
Here $\bm{F}=\{ f_{n_1,n_2} \}_{0\le n_1,n_2\le N} \in \mathbb{R}^{(N+1)\times(N+1)}$ and $\bm{F}'=\{ f'_{n_1,n_2} \}_{0\le n_1,n_2\le N} \in \mathbb{R}^{(N+1)\times(N+1)}$ are matrices. $\bm{T}^{(i)}=\{ t_{n', n}^{(i)} \}_{0\le n',n\le N} \in \mathbb{R}^{(N+1)\times(N+1)}$ for $i=1,2$ denote the transformation matrix in 1D in $x_1$ and $x_2$ dimensions.

We can compute \eqref{eq:full-trans-x-y} dimension by dimension: first do the transformation in $x_1$ dimension:
\begin{equation}\label{eq:full-trans-x}
    g_{n_1,n_2'} = \sum_{0\le n_1'\le N} f'_{n_1',n_2'} t_{n_1',n_1}^{(1)}, \quad 0\le n_1,n_2'\le N.
\end{equation}
and then do the transformation in $x_2$ dimension:
\begin{equation}\label{eq:full-trans-y}
    f_{n_1,n_2} = \sum_{0\le n_2'\le N} g_{n_1,n_2'} t_{n_2',n_2}^{(2)}, \quad 0\le n_1,n_2\le N.
\end{equation}
Here, the intermediate matrix is denoted by $\{ g_{n_1,n_2} \}_{0\le n_1,n_2\le N}\in\mathbb{R}^{(N+1)\times(N+1)}$. It is easy to verify that \eqref{eq:full-trans-x}-\eqref{eq:full-trans-y} is equivalent to \eqref{eq:full-trans-x-y}. Moreover, if we first do the transformation in $x_2$ dimension:
\begin{equation}
    g_{n_1',n_2} = \sum_{0\le n_1'\le N} f'_{n_1',n_2'} t_{n_2',n_2}^{(2)}, \quad 0\le n_1,n_2'\le N,
\end{equation}
and then do the transformation in $x_1$ dimension:
\begin{equation}
    f_{n_1,n_2} = \sum_{0\le n_2'\le N} g_{n_1',n_2} t_{n_1',n_1}^{(1)}, \quad 0\le n_1,n_2\le N,
\end{equation}
which is also equivalent to \eqref{eq:full-trans-x-y}.

Next, we extend the same idea to the matrix-vector multiplication with different range of indices:
\begin{equation}\label{eq:sparse-trans-x-y}
    f_{n_1,n_2} = \sum_{0\le n_1'+n_2'\le N} f'_{n_1',n_2'} t_{n_1',n_1}^{(1)} t_{n_2',n_2}^{(2)}, \quad 0\le n_1+n_2\le N.
\end{equation}
Here, $f_{n_1,n_2}$ and $f'_{n_1,n_2}$ are only defined for $0\le n_1+n_2\le N$.

We try to compute \eqref{eq:sparse-trans-x-y} in the same way: first do the transformation in $x_1$ dimension:
\begin{equation}\label{eq:sparse-trans-x}
    g_{n_1,n_2'} = \sum_{0\le n_1'\le N-n_2'} f'_{n_1',n_2'} t_{n_1',n_1}^{(1)}, \quad 0\le n_1+n_2'\le N,
\end{equation}
and then do the transformation in $x_2$ dimension:
\begin{equation}\label{eq:sparse-trans-y}
    f_{n_1,n_2} = \sum_{0\le n_2'\le N-n_1} g_{n_1,n_2'} t_{n_2',n_2}^{(2)}, \quad 0\le n_1+n_2\le N.
\end{equation}
Here, the intermediate variable $g_{n_1,n_2}$  is only defined for $0\le n_1+n_2\le N$. 

By plugging \eqref{eq:sparse-trans-x} into \eqref{eq:sparse-trans-y}, we have
\begin{equation}\label{eq:sparse-trans-dim-by-dim-x-y}
    f_{n_1,n_2} = \sum_{0\le n_1'\le N-n_2', 0\le n_2'\le N-n_1} f'_{n_1',n_2'} t_{n_1',n_1}^{(1)} t_{n_2',n_2}^{(2)}, \quad 0\le n_1+n_2 \le N.
\end{equation}
Notice that the summation set $\{(n_1',n_2')\mid 0\le n_1'\le N-n_2', 0\le n_2'\le N-n_1\}$ in \eqref{eq:sparse-trans-dim-by-dim-x-y} is different from the original one $\{(n_1',n_2')\mid 0\le n_1'+n_2'\le N \}$ in \eqref{eq:sparse-trans-x-y}. Therefore, the two algorithms \eqref{eq:sparse-trans-x-y} and \eqref{eq:sparse-trans-x}-\eqref{eq:sparse-trans-y} are not equivalent. However, under the condition that $T^{(1)}$ is lower triangular matrix or $T^{(2)}$ is upper triangular matrix, \eqref{eq:sparse-trans-x-y} and \eqref{eq:sparse-trans-x}-\eqref{eq:sparse-trans-y} are equivalent. This will be illustrated and proved in Proposition \ref{prop:equivalence-matrix-vector-2D}.

We can also choose to do the transformation in $x_2$ dimension first:
\begin{equation}\label{eq:sparse-trans-first-y}
    g_{n_1',n_2} = \sum_{0\le n_2'\le N-n_1'} f'_{n_1',n_2'} t_{n_2',n_2}^{(2)}, \quad 0\le n_1'+n_2\le N,
\end{equation}
and then do the transformation in $x_1$ dimension:
\begin{equation}\label{eq:sparse-trans-second-x}
    f_{n_1,n_2} = \sum_{0\le n_2\le N-n_1'} g_{n_1',n_2} t_{n_1',n_1}^{(1)}, \quad 0\le n_1+n_2\le N.
\end{equation}
Similar as before, under the condition that $T^{(1)}$ is  upper triangular matrix or $T^{(2)}$ is lower triangular matrix, \eqref{eq:sparse-trans-x-y} and \eqref{eq:sparse-trans-first-y}-\eqref{eq:sparse-trans-second-x} are equivalent. This is also shown in Proposition \ref{prop:equivalence-matrix-vector-2D}.

\begin{prop}[equivalence of matrix-vector multiplication in 2D]\label{prop:equivalence-matrix-vector-2D}
\begin{enumerate}
	\item 
	Under any of the following two conditions:
	\begin{enumerate}
	\item	
	$T^{(1)}$ is lower triangular matrix, i.e., $t_{n_1', n_1}^{(1)}=0$ for $n_1'< n_1$;
	\item
	$T^{(2)}$ is upper triangular matrix, i.e., $t_{n_2', n_2}^{(2)}=0$ for $n_2'> n_2$;
	\end{enumerate}
	the matrix-vector multiplication in \eqref{eq:sparse-trans-x-y} is equivalent to first doing transformation in $x_1$ dimension in \eqref{eq:sparse-trans-x} and then in $x_2$ dimension in \eqref{eq:sparse-trans-y}.
	
	\item
	Under any of the following two conditions:
	\begin{enumerate}
	\item
	$T^{(1)}$ is upper triangular matrix, i.e., $t_{n_1', n_1}^{(1)}=0$ for $n_1'> n_1$;
	\item
	$T^{(2)}$ is lower triangular matrix, i.e., $t_{n_2', n_2}^{(2)}=0$ for $n_2'< n_2$;
	\end{enumerate}
	the matrix-vector multiplication in \eqref{eq:sparse-trans-x-y} is equivalent to first doing transformation in $x_2$ dimension in \eqref{eq:sparse-trans-first-y} and then in $x_1$ dimension in \eqref{eq:sparse-trans-second-x}.
\end{enumerate}
\end{prop}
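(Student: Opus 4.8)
The plan is to reduce the claim to a purely combinatorial statement about which terms survive in the two summations, and then to observe that each triangularity hypothesis annihilates exactly the discrepant terms. Since the excerpt already composes the two one-dimensional sweeps \eqref{eq:sparse-trans-x}--\eqref{eq:sparse-trans-y} into the single expression \eqref{eq:sparse-trans-dim-by-dim-x-y}, proving Part 1 amounts to showing that, for every output index $(n_1,n_2)$ with $n_1+n_2\le N$, the sum in \eqref{eq:sparse-trans-dim-by-dim-x-y} over the set $S'=\{(n_1',n_2'):\,n_1'+n_2'\le N,\ n_1+n_2'\le N\}$ agrees with the sum in \eqref{eq:sparse-trans-x-y} over the larger set $S=\{(n_1',n_2'):\,n_1'+n_2'\le N\}$. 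Equivalently, I must show that the contribution of the difference set $S\setminus S'=\{(n_1',n_2'):\,n_1'+n_2'\le N,\ n_2'>N-n_1\}$ vanishes.

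First I would record two elementary consequences of membership in $S\setminus S'$. If $(n_1',n_2')\in S\setminus S'$, then $n_1'\le N-n_2'<N-(N-n_1)=n_1$, so $n_1'<n_1$; and since the output index satisfies $n_1+n_2\le N$ we also have $n_2\le N-n_1<n_2'$, so $n_2'>n_2$. With these two strict inequalities in hand, Part 1 follows immediately: under hypothesis 1(a) the factor $t_{n_1',n_1}^{(1)}$ vanishes on $S\setminus S'$ because $n_1'<n_1$ and $T^{(1)}$ is lower triangular, while under hypothesis 1(b) the factor $t_{n_2',n_2}^{(2)}$ vanishes there because $n_2'>n_2$ and $T^{(2)}$ is upper triangular. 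In either case every summand indexed by $S\setminus S'$ is zero, so extending the summation from $S'$ to $S$ changes nothing, and \eqref{eq:sparse-trans-x-y} coincides with the sweep \eqref{eq:sparse-trans-x}--\eqref{eq:sparse-trans-y}.

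Part 2 I would handle by the identical bookkeeping with the roles of the two dimensions interchanged. Composing \eqref{eq:sparse-trans-first-y} and \eqref{eq:sparse-trans-second-x} yields a sum over $S''=\{(n_1',n_2'):\,n_1'+n_2'\le N,\ n_1'+n_2\le N\}$, and the relevant difference set is $S\setminus S''=\{(n_1',n_2'):\,n_1'+n_2'\le N,\ n_1'>N-n_2\}$. The analogues of the two inequalities read $n_1'>n_1$ (from $n_1\le N-n_2<n_1'$) and $n_2'<n_2$ (from $n_2'\le N-n_1'<N-(N-n_2)=n_2$); hypothesis 2(a) then kills $t_{n_1',n_1}^{(1)}$ by upper triangularity, and hypothesis 2(b) kills $t_{n_2',n_2}^{(2)}$ by lower triangularity, so again the two procedures agree.

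The only real care needed, and the step I expect to be the main obstacle, is the index bookkeeping: correctly identifying the difference sets $S\setminus S'$ and $S\setminus S''$, and checking that the intermediate array $g$ is genuinely defined on the full range over which the second sweep ranges (for instance, in \eqref{eq:sparse-trans-y} the summation runs over $0\le n_2'\le N-n_1$, which is exactly the region $n_1+n_2'\le N$ where $g_{n_1,n_2'}$ was produced in \eqref{eq:sparse-trans-x}), so that no term of the composed sum is silently dropped. Once the difference set is pinned down, the triangularity hypotheses are seen to have been reverse-engineered to make precisely those terms vanish, and nothing further is required beyond the two one-line inequalities above.
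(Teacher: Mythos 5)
Your proof is correct and takes essentially the same route as the paper's: both are pure index-bookkeeping arguments showing that the triangularity hypothesis annihilates precisely the terms on which the composed summation set and the original set $\{(n_1',n_2'):\, n_1'+n_2'\le N\}$ disagree --- the paper phrases this as an equality of intersections with the nonvanishing region $\{n_1'\ge n_1\}$, while you phrase it as vanishing on the difference set $S\setminus S'$, which is the same observation. If anything, yours is slightly more complete, since the paper writes out only case 1(a) and defers the rest, whereas your two inequalities $n_1'<n_1$ and $n_2'>n_2$ on $S\setminus S'$ dispatch 1(a) and 1(b) simultaneously, and the symmetric pair handles Part 2.
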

\begin{proof}
We only give the proof for the first case in 1(a) and other cases can be proved along the same line.

	If $T^{(1)}$ is lower triangular matrix, i.e., $t_{n_1', n_1}^{(1)}=0$ for $n_1'< n_1$, then the summation set in \eqref{eq:sparse-trans-dim-by-dim-x-y} will be reduced to $\{(n_1',n_2')\mid 0\le n_1'\le N-n_2', 0\le n_2'\le N-n_1\} \cap \{(n_1',n_2')\mid n_1'\ge n_1\} = \{(n_1',n_2')\mid n_1 \le n_1'\le N, 0\le n_2'\le N-n_1'\}$. The original    summation set in \eqref{eq:sparse-trans-x-y} will be reduced to $\{(n_1',n_2')\mid 0\le n_1'+n_2'\le N \} \cap \{(n_1',n_2')\mid n_1'\ge n_1\} = \{(n_1',n_2')\mid n_1 \le n_1'\le N, 0\le n_2'\le N-n_1'\}$. Therefore, these two summation set are equivalent.
\end{proof}

Motivated by Proposition \ref{prop:equivalence-matrix-vector-2D}, we first decompose $T^{(1)}$ into:
\begin{equation}
    T^{(1)} = L^{(1)} + U^{(1)},
\end{equation}
where $L^{(1)}=\{ l_{n_1',n_1}^{(1)} \}$ and $U^{(1)}=\{ u_{n_1',n_1}^{(1)} \}$ are the lower and upper part of $T^{(1)}$, respectively. Note that the diagonal part of $T^{(1)}$ can be either put in $L^{(1)}$ or $U^{(1)}$. Then we decompose the summation \eqref{eq:sparse-trans-x-y} to:
\begin{equation}
    \begin{aligned}
        f_{n_1,n_2} &= \sum_{0\le n_1'+n_2'\le N} f'_{n_1',n_2'} (l_{n_1',n_1}^{(1)} + u_{n_1',n_1}^{(1)}) t_{n_2',n_2}^{(2)},  \\
        &= \sum_{0\le n_1'+n_2'\le N} f'_{n_1',n_2'} l_{n_1',n_1}^{(1)} t_{n_2',n_2}^{(2)} + \sum_{0\le n_1'+n_2'\le N} f'_{n_1',n_2'} t_{n_2',n_2}^{(2)} u_{n_1',n_1}^{(1)}.
    \end{aligned}    
\end{equation}
To compute the first term $\sum_{0\le n_1'+n_2'\le N} f'_{n_1',n_2'} l_{n_1',n_1}^{(1)} t_{n_2',n_2}^{(2)}$, we first do transformation in $x_1$ dimension as in \eqref{eq:sparse-trans-x} and then in $x_2$ dimension as in \eqref{eq:sparse-trans-y}. To compute the second term $\sum_{0\le n_1'+n_2'\le N} f'_{n_1',n_2'} t_{n_2',n_2}^{(2)} u_{n_1',n_1}^{(1)}$, we first doing transformation in $x_2$ dimension as in \eqref{eq:sparse-trans-first-y} and then in $x_1$ dimension as in \eqref{eq:sparse-trans-second-x}. Note that here we can also decompose $T^{(2)}$ into the lower and upper parts.

\subsubsection{Multidimensional case and adaptive sparse grid}

To generalize the 2D case to the multidimensional case and the adaptive sparse grid method, we consider the matrix-vector multiplication in the following form:
\begin{equation}\label{eq:adapt-multiD}
    f_{\bm{n}_1,\bm{n}_2,\dots,\bm{n}_d} = \sum_{(\bm{n}_1',\bm{n}_2',\dots,\bm{n}_d')\in G} f'_{\bm{n}_1',\bm{n}_2',\dots,\bm{n}_d'} t_{\bm{n}_1', \bm{n}_1}^{(1)} t_{\bm{n}_2',\bm{n}_2}^{(2)}\cdots t_{\bm{n}_d',\bm{n}_d}^{(d)}, \quad (\bm{n}_1,\bm{n}_2,\dots,\bm{n}_d) \in G,
\end{equation}
where $\bm{n}_i=(l_i,j_i,k_i)$ and $\bm{n}_i'=(l_i',j_i',k_i') \in\mathbb{N}_0^{3}$ for $i=1,2,\cdots,d$. In the adaptive sparse grid, $l_i$, $j_i$ and $k_i$ denotes the mesh level, the support index and the polynomial degree, in the $i$-th dimension, respectively.

For any two indices $\bm{n}_i = (l_i, j_i, k_i)$ and $\bm{n}_i' = (l_i', j_i', k_i')$, we define the order: 
\begin{enumerate}
	\item $\bm{n}_i \preceq \bm{n}_i'$ (or $\bm{n}_i \prec \bm{n}_i'$) if and only if $l_i \le l_i'$ (or $l_i < l_i'$);
	\item $\bm{n}_i \succeq \bm{n}_i'$ (or $\bm{n}_i \succ \bm{n}_i'$) if and only if $l_i \ge l_i'$ (or $l_i > l_i'$).
\end{enumerate}
Based on this order relation, we say
\begin{enumerate}
	\item $\bm{T}^{(i)} = \{ t_{\bm{n}_i', \bm{n}_i}^{(i)} \}$ is (strictly) lower triangular if and only if $t_{\bm{n}_i', \bm{n}_i}^{(i)} = 0$ for $\bm{n}_i' \prec \bm{n}_i$ ($\bm{n}_i' \preceq \bm{n}_i$);

	\item $\bm{T}^{(i)} = \{ t_{\bm{n}_i', \bm{n}_i}^{(i)} \}$ is (strictly) upper triangular if and only if $t_{\bm{n}_i', \bm{n}_i}^{(i)} = 0$ for $\bm{n}_i' \succ \bm{n}_i$ ($\bm{n}_i' \succeq \bm{n}_i$);
\end{enumerate}

We assume that $G$ satisfies the requirement that it is downward closed, i.e. for any basis function with some index in the set $G$, the index corresponding to the basis function in its parent element is also in $G$. Note that this requirement is enforced in our adaptive procedure.

Next, we try to do the transformation dimension by dimension as in the 2D case. We start with the transformation along the $x_1$ dimension:
\begin{equation}\label{eq:adapt-x1}
    g^{(1)}_{\bm{n}_1,\bm{n}_2',\dots,\bm{n}_d'} = \sum_{(\bm{n}_1',\bm{n}_2',\dots,\bm{n}_d')\in G} f'_{\bm{n}_1',\bm{n}_2',\dots,\bm{n}_d'} t_{\bm{n}_1', \bm{n}_1}^{(1)}, \quad (\bm{n}_1,\bm{n}_2',\dots,\bm{n}_d')\in G,
\end{equation}
then the $x_2$ dimension:
\begin{equation}\label{eq:adapt-x2}
    g^{(2)}_{\bm{n}_1,\bm{n}_2,\bm{n}_3'\dots,\bm{n}_d'} = \sum_{(\bm{n}_1,\bm{n}_2',\dots,\bm{n}_d')\in G} g^{(1)}_{\bm{n}_1,\bm{n}_2',\dots,\bm{n}_d'} t_{\bm{n}_2', \bm{n}_2}^{(2)}, \quad (\bm{n}_1,\bm{n}_2,\dots,\bm{n}_d')\in G,
\end{equation}
and all the way up to $x_d$ dimension:
\begin{equation}\label{eq:adapt-xd}
    f_{\bm{n}_1,\bm{n}_2,\bm{n}_3\dots,\bm{n}_d} = \sum_{(\bm{n}_1,\bm{n}_2,\dots,\bm{n}_{d-1},\bm{n}_d')\in G} g^{(d-1)}_{\bm{n}_1,\bm{n}_2,\dots,\bm{n}_{d-1},\bm{n}_d'} t_{\bm{n}_d', \bm{n}_d}^{(d)}, \quad (\bm{n}_1,\bm{n}_2,\dots,\bm{n}_d)\in G.
\end{equation}
We try to plug the first equation \eqref{eq:adapt-x1} into the second one \eqref{eq:adapt-x2} and all the way up to the last one \eqref{eq:adapt-xd}:
\begin{equation}
    f_{\bm{n}_1,\bm{n}_2,\dots,\bm{n}_d} = \sum_{(\bm{n}_1',\bm{n}_2',\dots,\bm{n}_d')\in S_f} f'_{\bm{n}_1',\bm{n}_2',\dots,\bm{n}_d'} t_{\bm{n}_1', \bm{n}_1}^{(1)} t_{\bm{n}_2',\bm{n}_2}^{(2)}\cdots t_{\bm{n}_d',\bm{n}_d}^{(d)}, \quad (\bm{n}_1,\bm{n}_2,\dots,\bm{n}_d) \in G,
\end{equation}
where the overall summation set is
\begin{align*}
    S_{{f}} := \{ (\bm{n}_1',\bm{n}_2',\cdots,\bm{n}_{d-1}',\bm{n}_d') \mid 
   & (\bm{n}_1',\bm{n}_2',\cdots,\bm{n}_{d-1}',\bm{n}_d')\in G, \dots, \\
   & (\bm{n}_1,\bm{n}_2',\cdots,\bm{n}_{d-1}',\bm{n}_d')\in G, \dots, 
    H(\bm{n}_1,\bm{n}_2,\cdots,\bm{n}_{d-1},\bm{n}_d')\in G \},	
\end{align*}
for $(\bm{n}_1,\bm{n}_2,\dots,\bm{n}_d)\in G$.
Generally, $S_f$ is only a subset of the orginal summation set 
\begin{equation}\label{eq:sum-set-original}
	S := \{ (\bm{n}_1',\bm{n}_2',\cdots,\bm{n}_{d-1}',\bm{n}_d') \mid (\bm{n}_1',\bm{n}_2',\cdots,\bm{n}_{d-1}',\bm{n}_d')\in G \}.
\end{equation}
However, if we assume that, for some integer $1\le k\le d$, $\bm{T}^{(i)}$ for $i=1,\dots,k-1$ are lower triangular and $\bm{T}^{(i)}$ for $i=k+1,\dots,d$ are upper triangular (or $\bm{T}^{(i)}$ for $i=1,\dots,k-1$ are lower triangular and $\bm{T}^{(i)}$ for $i=k+1,\dots,d$ are upper triangular), then these two constrains are equivalent, i.e., $S_f = S$. This is proved in the following proposition.

\begin{prop}\label{prop:equivalence-matrix-vector-multiD}
	Suppose that there exists some integer $1\le k\le d$ such that $\bm{T}^{(i)}$ for $i=1,\dots,k-1$ are lower triangular and $\bm{T}^{(i)}$ for $i=k+1,\dots,d$ are upper triangular (or $\bm{T}^{(i)}$ for $i=1,\dots,k-1$ are lower triangular and $\bm{T}^{(i)}$ for $i=k+1,\dots,d$ are upper triangular), then the algorithm \eqref{eq:adapt-multiD} are equivalent to the computation \eqref{eq:adapt-x1}-\eqref{eq:adapt-x2}-\eqref{eq:adapt-xd} dimension by dimension.
\end{prop}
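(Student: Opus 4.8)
The plan is to reduce the multidimensional statement to the same set-equality argument used in Proposition \ref{prop:equivalence-matrix-vector-2D}, carried out one direction at a time, with the downward-closed (hierarchical) structure of $G$ supplying the geometric input. Since unwinding \eqref{eq:adapt-x1}--\eqref{eq:adapt-xd} only ever \emph{adds} constraints to the summation index, one always has $S_f\subseteq S$; so it suffices to prove the reverse inclusion after discarding those indices on which the product $t^{(1)}_{\bm{n}_1',\bm{n}_1}\cdots t^{(d)}_{\bm{n}_d',\bm{n}_d}$ vanishes, since they contribute nothing to either sum. Concretely, $S_f$ is exactly the set of $(\bm{n}_1',\dots,\bm{n}_d')$ for which every intermediate mixed index $M_m:=(\bm{n}_1,\dots,\bm{n}_m,\bm{n}_{m+1}',\dots,\bm{n}_d')$ lies in $G$, whereas membership in $S$ only asks that the input index $M_0=(\bm{n}_1',\dots,\bm{n}_d')$ lie in $G$. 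Thus the whole proposition reduces to the following: assuming that both the input index $M_0$ and the output index $(\bm{n}_1,\dots,\bm{n}_d)$ are in $G$ and that the product above is nonzero, show $M_m\in G$ for every $m=1,\dots,d-1$.

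First I would record the key consequence of triangularity. Because the $1$D entry $t^{(i)}_{\bm{n}_i',\bm{n}_i}$ is a volume/interface interaction of two multiwavelets supported on dyadic cells, it can be nonzero only if the two supports intersect; but two dyadic cells that intersect must be nested, so the finer-level basis is supported inside the coarser one. Hence a nonzero entry of a lower-triangular $\bm{T}^{(i)}$ (which forces $l_i'\ge l_i$) means $\bm{n}_i'$ is a descendant of $\bm{n}_i$ in direction $i$, and a nonzero entry of an upper-triangular $\bm{T}^{(i)}$ (which forces $l_i'\le l_i$) means $\bm{n}_i$ is a descendant of $\bm{n}_i'$. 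This is precisely the ancestor--descendant relation underlying the parent/child definition, and it is what connects triangularity to the downward-closed hypothesis on $G$.

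With this in hand I would split on the pivot $k$. For $m<k$, all directions $1,\dots,m$ are lower-triangular, so in each $\bm{n}_i$ is an ancestor of $\bm{n}_i'$; hence $M_m$ is obtained from the input index $M_0\in G$ by replacing its first $m$ coordinates with ancestors, making $M_m$ an ancestor of a point of $G$, so downward-closedness yields $M_m\in G$. For $m\ge k$, all directions $m+1,\dots,d$ are upper-triangular, so there $\bm{n}_i'$ is an ancestor of $\bm{n}_i$; hence $M_m$ is obtained from the output index $(\bm{n}_1,\dots,\bm{n}_d)\in G$ by replacing its last $d-m$ coordinates with ancestors, and again $M_m\in G$. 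The boundary pivots $k=1$ and $k=d$ are covered by the same dichotomy, one of the two families of constraints simply being empty, and the mirror hypothesis is dispatched by the symmetric argument, exactly as in the two-dimensional case.

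I expect the only genuinely delicate point to be the middle step: cleanly justifying that a nonzero triangular entry encodes an ancestor--descendant pair, and in particular that the \emph{support index} (not merely the mesh level $l_i$) of $\bm{n}_i$ is forced to be the unique ancestor of $\bm{n}_i'$ at level $l_i$. Once that dyadic-nesting fact is pinned down, the identity $S_f=S$ on the support of the summand follows mechanically from the two ancestor reductions above, and the equivalence of \eqref{eq:adapt-multiD} with the dimension-by-dimension scheme \eqref{eq:adapt-x1}--\eqref{eq:adapt-xd} is immediate.
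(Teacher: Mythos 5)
Your proposal is correct and takes essentially the same route as the paper's proof: the identical dichotomy that, for each intermediate mixed index, either the relevant triangular entry vanishes because the supports of the two basis functions are disjoint, or dyadic nesting forces the mixed index to sit in an ancestor element, whose membership in $G$ follows from the downward-closed assumption. The paper only writes out the representative case $d=3$ with $\bm{T}^{(1)},\bm{T}^{(2)}$ both lower triangular and remarks that the other cases follow the same line, whereas you make the general pivot split explicit (lower-triangular prefix handled via the input index in $G$, upper-triangular suffix via the output index in $G$) --- a more systematic write-up of the same argument.
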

\begin{proof}
We show the proof in the case of $d=3$ and $\bm{T}^{(1)}$ and ${\bm{T}^{(2)}}$ are both lower triangular matrix. The other cases follow the same line.

Since $\bm{T}^{(1)}$ and ${\bm{T}^{(2)}}$ are both lower triangular matrix, we have the extra constraint
\begin{equation}\label{eq:proof-extra-constrain-1}
    \bm{n}_1'\succeq \bm{n}_1, \quad \bm{n}_2'\succeq \bm{n}_2.
\end{equation}
Take any index $(\bm{n}_1',\bm{n}_2',\bm{n}_3')\in S$. Since $\bm{n}_1'\succeq \bm{n}_1$, there are only two cases: the first is that the basis function with the index $(\bm{n}_1,\bm{n}_2',\bm{n}_3')$ belongs to the parent (or parent's parent) element of $(\bm{n}_1',\bm{n}_2',\bm{n}_3')$ and thus $(\bm{n}_1,\bm{n}_2',\bm{n}_3')\in G$; the second is that the supports of two basis functions with the indices $(\bm{n}_1',\bm{n}_2',\bm{n}_3')$ and $(\bm{n}_1,\bm{n}_2',\bm{n}_3')$ have empty intersection and thus $t_{\bm{n}_1',\bm{n}_1}^{(1)} = 0$. 
Due to $\bm{n}_2'\succeq \bm{n}_2$, we have similar conclusions as the $x_1$ dimension. Therefore, either $S\subseteq S_f$ or the two summations are equivalent.
\end{proof}

Motivated by Proposition \ref{prop:equivalence-matrix-vector-multiD}, we can decompose the matrix in the first $(d-1)$ dimensions into the lower and upper parts:
\begin{equation}
    f_{\bm{n}_1,\bm{n}_2,\dots,\bm{n}_d} = \sum_{(\bm{n}_1',\bm{n}_2',\dots,\bm{n}_d')\in G} f'_{\bm{n}'} (l_{\bm{n}_1', \bm{n}_1}^{(1)}+u_{\bm{n}_1', \bm{n}_1}^{(1)})(l_{\bm{n}_2', \bm{n}_2}^{(2)}+u_{\bm{n}_2', \bm{n}_2}^{(2)})\cdots (l_{\bm{n}_{d-1}', \bm{n}_{d-1}}^{(d-1)}+u_{\bm{n}_{d-1}', \bm{n}_{d-1}}^{(d-1)})t_{\bm{n}_d',\bm{n}_d}^{(d)},
\end{equation}
for $(\bm{n}_1,\bm{n}_2,\dots,\bm{n}_d)\in G$.
Here $\bm{L}^{(i)} = \{ l_{{\bm{n}_i'}, \bm{n}_i}^{(i)} \}$ and $\bm{U}^{(i)} = \{ u_{{\bm{n}_i'}, \bm{n}_i}^{(i)} \}$ for $i=1,2,\cdots,d-1$ denote the lower and upper parts of $T^{(i)}$, respectively. Then, we  multipy every term out, which will result in $2^{d-1}$ terms for the summation. For each term, we can perform the multiplication dimension by dimension. The order is that first do the lower triangular matrix, and then the full matrix in between, and follows the upper triangular matrix. Here we list the case of $d=3$ as an example:
\begin{align*}
f_{\bm{n}_1,\bm{n}_2,\bm{n}_3} ={}& \sum_{(\bm{n}_1',\bm{n}_2',\bm{n}_3')\in G} f'_{\bm{n}'_1,\bm{n}_2',\bm{n}_3'} (l_{\bm{n}_1', \bm{n}_1}^{(1)}+u_{\bm{n}_2', \bm{n}_1}^{(1)})(l_{\bm{n}_2', \bm{n}_2}^{(2)}+u_{\bm{n}_2', \bm{n}_2}^{(2)})t_{\bm{n}_3',\bm{n}_3}^{(3)}, \\
={}& \sum_{(\bm{n}_1',\bm{n}_2',\bm{n}_3')\in G} f'_{\bm{n}_1',\bm{n}_2',\bm{n}_3'} l_{\bm{n}_1', \bm{n}_1}^{(1)} l_{\bm{n}_2',\bm{n}_2}^{(2)} t_{\bm{n}_3', \bm{n}_3}^{(3)} 
+ \sum_{(\bm{n}_1',\bm{n}_2',\bm{n}_3')\in G} f'_{\bm{n}_1',\bm{n}_2',\bm{n}_3'} l_{\bm{n}_1', \bm{n}_1}^{(1)} t_{\bm{n}_3',\bm{n}_3}^{(3)} u_{\bm{n}_2', \bm{n}_2}^{(2)} \\
& + \sum_{(\bm{n}_1',\bm{n}_2',\bm{n}_3')\in G} f'_{\bm{n}_1',\bm{n}_2',\bm{n}_3'} l_{\bm{n}_2', \bm{n}_2}^{(2)} t_{\bm{n}_3',\bm{n}_3}^{(3)} u_{\bm{n}_2', \bm{n}_2}^{(2)}
+ \sum_{(\bm{n}_1',\bm{n}_2',\bm{n}_3')\in G} f'_{\bm{n}_1',\bm{n}_2',\bm{n}_3'} t_{\bm{n}_3',\bm{n}_3}^{(3)} u_{\bm{n}_1', \bm{n}_1}^{(1)} u_{\bm{n}_2', \bm{n}_2}^{(2)}. \\
\end{align*}

The fast matrix-vector multiplication is implemented in \verb|void FastMultiplyLU::transform_1D()|, which do the multiplication dimension by dimension. The base class \verb|FastMultiplyLU| will be inherited by other classes and used in the interpolation, computational of the residuals. We will show in details in the later sections.

\subsection{Computing nonlinear terms}

The interpolation operator is applied when dealing with nonlinear terms. it is implemented in the base class \verb|Interpolation| and its inheritance class \verb|LagrInterpolation| and \verb|HermInterpolation|.
As an example, we consider scalar hyperbolic conservation laws in two dimension:
\begin{equation}
	u_t + f(u)_{x_1} + g(u)_{x_2} = 0.
\end{equation}
The semi-discrete DG scheme is
\begin{equation}
\begin{split}
	\int_{\Omega} (u_h)_t \phi dx_1 dx_2 =& \sum_{i,j} \int_{I_{ij}} (f(u_h) \phi_{x_1} + g(u_h) \phi_{x_2})dx_1 dx_2 - \sum_{i,j}\int_{I_j}(\hat{f}_{i+\frac{1}{2},j} \phi^-_{i+\frac{1}{2},j} - \hat{f}_{i-\frac{1}{2},j} \phi^+_{i-\frac{1}{2},j})dx_2 \\
	&- \sum_{i,j}\int_{I_i}(\hat{g}_{i,j+\frac{1}{2}} \phi^-_{i,j+\frac{1}{2}} - \hat{g}_{i,j-\frac{1}{2}} \phi^+_{i,j-\frac{1}{2}})dx_1.
\end{split}
\end{equation}
Here, we use the global Lax-Friedrichs flux:
\begin{equation}
	\hat{f}(u^-, u^+) = \frac{1}{2}(f(u^-)+f(u^+)) - \frac{\alpha}{2}(u^+-u^-).
\end{equation}
and
\begin{equation}
	\hat{g}(u^-, u^+) = \frac{1}{2}(g(u^-)+g(u^+)) - \frac{\alpha}{2}(u^+-u^-).
\end{equation}

In the classic DG methods, the integrals over elements and edges are often approximated by numerical quadrature rules on each cell \cite{DG4}. However, in the sparse grid DG method, this naive approach would result in computational cost that is proportional to the number of fundamental elements, i.e., $\mathcal{O}(h^{-d})$, and is still subject to the curse of dimensionality. To evaluate the integrals over elements and edges more efficiently with a cost proportional to the DOF of the underlying finite element space, we propose to interpolate the nonlinear function $f(u_h)$ by using the multiresolution Lagrange (or Hermite) interpolation basis functions \cite{huang2019adaptive}. Therefore, the semidiscrete DG scheme with interpolation is
\begin{equation}
\begin{split}
	\int_{\Omega} (u_h)_t \phi dxdy =& \sum_{i,j} \int_{I_{ij}} (\mathcal{I}[f(u_h)] \phi_{x_1} + \mathcal{I}[g(u_h)] \phi_{x_2})dx_1 dx_2 - \sum_{i,j}\int_{I_j}(\mathcal{I}[\hat{f}]_{i+\frac{1}{2},j} \phi^-_{i+\frac{1}{2},j} - \mathcal{I}[\hat{f}]_{i-\frac{1}{2},j} \phi^+_{i-\frac{1}{2},j})dx_2 \\
	&- \sum_{i,j}\int_{I_i}(\mathcal{I}[\hat{g}]_{i,j+\frac{1}{2}} \phi^-_{i,j+\frac{1}{2}} - \mathcal{I}[\hat{g}]_{i,j-\frac{1}{2}} \phi^+_{i,j-\frac{1}{2}})dx_1.
\end{split}
\end{equation}

Now we show the main procedure to interpolation $f(u_h)$. %
This is implemented in the function \verb|void HermInterpolation::nonlinear_Herm_2D_fast()| with the Hermite interpolation in 2D. The main procedure of the function is presented in the following:
\begin{lstlisting}
{
	fastHerm.eval_up_Herm();

	eval_fp_Her_2D(func, func_d1, func_d2, is_intp);

	pw1d.clear();

	eval_fp_to_coe_D_Her(is_intp);	
}
\end{lstlisting}
The first step in this function \verb|fastHerm.eval_up_Herm()| is to read the values (and derivatives) of $u_h$ at the interpolation points, i.e., transform the coefficients of Alpert basis in $u_h$ to the values (and derivatives) at the interpolation points. Here, the fast matrix-vector multiplication is used. The second step \verb|eval_fp_Her_2D(func, func_d1, func_d2, is_intp)| is to compute the values (and derivatives) of $f(u_h)$ at the interpolation points. This part is local in the sense that the computation of each point is independent of other points. Here, the value and derivatives of the scalar function $f(x)$ will be used. The last step \verb|eval_fp_to_coe_D_Her(is_intp)| is to transform the values (and derivatives) of $f(u_h)$ at the interpolation points to the coefficients of the interpolatory multiwavelets. Here, we use the fast matrix-vector multiplication again. The detailed algorithm is illustrated in \cite{tao2019collocation}.

Now we will show how to compute the bilinear form after the interpolation of $f(u_h)$ is expressed in terms of interpolatory multiwavelets. 
 As an example, we show the computation of the volume integral $\int_{\Omega} \mathcal{I}[f(u_h)] \phi_{x_1} dx_1 dx_2$. After the interpolation of $f(u_h)$, we obtain
\begin{equation}
	\mathcal{I}[f(u_h)] = \sum_{\substack{(\bl,\bj)\in H, \\\mathbf{1}\leq\bi\leq \bk+\mathbf{1}}} c_{\bi, \bl}^{\bj} \psi_{\bi, \bl}^{\bj}(\bx) = \sum_{\substack{(\bl,\bj)\in H, \\\mathbf{1}\leq\bi\leq \bk+\mathbf{1}}} c_{\bi, \bl}^{\bj} \psi_{i_1, l_1}^{j_1}(x_1) \psi_{i_2, l_2}^{j_2}(x_2)
\end{equation}
Take the test function $\phi = v_{\bi',\bl'}^{\bj'}(\bx) = v_{i_1',l_1'}^{j_1'}(x_1) v_{i_2',l_2'}^{j_2'}(x_2)$
\begin{align*}
	\int_{\Omega} \mathcal{I}[f(u_h)] \phi_{x_1} dx_1 dx_2 ={}& \int_{\Omega} \sum_{\substack{(\bl,\bj)\in H, \\\mathbf{1}\leq\bi\leq \bk+\mathbf{1}}} c_{\bi, \bl}^{\bj} \psi_{i_1, l_1}^{j_1}(x_1) \psi_{i_2, l_2}^{j_2}(x_2) \frac{d}{dx_1}v_{i_1',l_1'}^{j_1'}(x_1) v_{i_2',l_2'}^{j_2'}(x_2) dx_1 dx_2 \\	
	={}& \sum_{\substack{(\bl,\bj)\in H, \\\mathbf{1}\leq\bi\leq \bk+\mathbf{1}}}  c_{\bi, \bl}^{\bj} \int_{\Omega} \psi_{i_1, l_1}^{j_1}(x_1) \psi_{i_2, l_2}^{j_2}(x_2) \frac{d}{dx_1}v_{i_1',l_1'}^{j_1'}(x_1) v_{i_2',l_2'}^{j_2'}(x_2) dx_1 dx_2 \\
	={}& \sum_{\substack{(\bl,\bj)\in H, \\\mathbf{1}\leq\bi\leq \bk+\mathbf{1}}}  c_{\bi, \bl}^{\bj} \brac{\int_0^1 \psi_{i_1, l_1}^{j_1}(x_1) \frac{d}{dx_1}v_{i_1',l_1'}^{j_1'}(x_1) dx_1} \brac{\int_0^1 \psi_{i_2, l_2}^{j_2}(x_2)  v_{i_2',l_2'}^{j_2'}(x_2) dx_2}
\end{align*}
This can be efficiently computed using the fast matrix-vector multiplication in Section \ref{sec:fast}. In particular, the function 
\verb|void HyperbolicHermRHS::rhs_vol_scalar()| in the class \verb|HyperbolicHermRHS| is to compute this residual using the fast algorithm implemented in the base class \verb|FastRHS|.

\subsection{ODE solvers}

We implement the commonly used ODE solvers in the base class \verb|ODESolver| and its inheritance classes. Here, we use the linear algebra package \verb|Eigen| \cite{eigenweb} to perform matrix-vector multiplication and linear solvers. 
For the explicit RK method, the package includes the first-order Euler forward method in \verb|ForwardEuler|, the second-order and third-order strong-stability-preserving (SSP) RK method \cite{shu1988jcp} in \verb|RK2SSP| and \verb|RK3SSP|, the classic fourth-order RK method in \verb|RK4|. The explicit multistep method includes the second-order and fourth-order Newmark method \cite{wanner1996solving} in \verb|Newmark2nd| and \verb|Newmark4th|. The implicit-explicit (IMEX) method includes the third-order IMEX RK method \cite{pareschi2005implicit} in \verb|IMEX43|.

\section{Examples}
\label{sec:example}

In this section, we use several examples to illustrate how to use this package and the code performance. Due to the page limit, we only present several representative equations including the linear equation with constant coefficients, the Hamilton-Jacobi equations, and the wave equations. There are also other examples available in the GitHub repository, such as the Schr\"{o}dinger equations, the Korteweg-de Vries (KdV) equation and its two-dimensional generalization, the Zakharov-Kuznetsov (ZK) equation. We will show the sample code and the CPU time. All the test cases are run in the High Performance Computing Center (HPCC) at Michigan State University with the AMD EPYC 7H12 Processor @2.595 GHz. The single node with the multi-thread (OpenMP) and the GCC 8.3.0 compiler is used.

\subsection{Linear hyperbolic equation with constant coefficients}
We consider the linear hyperbolic equation with constant coefficients in the domain $\Omega = [0, 1]^d$ with periodic boundary conditions:
\begin{equation}\label{eq:linear-eq-const-coefficient}
	u_t + \sum_{i=1}^d u_{x_i} = 0
\end{equation}
with the initial condition
\begin{equation}
	u(x_1,\cdots,x_d,0) = \cos(2\pi(\sum_{i=1}^d{x_i})).
\end{equation}

As the first example, we show the sparse grid DG method without adaptivity. The code is in the GitHub repository \verb|./example/02_hyperbolic_01_scalar_const_coefficient.cpp|. Here, we present the main part of the code for solving this problem. 

The first part of the code is to declare some basic parameters in the package including the dimension of the problem, the polynomial degrees of the Alpert basis and interpolation basis, the maximum mesh level, the CFL number and the final time and so on. Note that this code has uniform treatment with different dimensions. One can simply modify the dimension in this part and most of the functions in the package are consistent with arbitrary dimensions. However, we would also like to point out an implementation shortcoming of the current version of our package. Since many variables (e.g. \verb|HermBasis::PMAX| and \verb|HermBasis::msh_case|) in the package are declared as static variables in C\texttt{++}, they have to be declared even if they are not really used when solving the linear equations \eqref{eq:linear-eq-const-coefficient}.
\begin{lstlisting}
// --- Part 1: preliminary part	---
// static variables
const int DIM = 4;

AlptBasis::PMAX = 2;

LagrBasis::PMAX = 5;
LagrBasis::msh_case = 1;

HermBasis::PMAX = 3;
HermBasis::msh_case = 1;

Element::PMAX_alpt = AlptBasis::PMAX;	// max polynomial degree for Alpert basis
Element::PMAX_intp = LagrBasis::PMAX;	// max polynomial degree for interpolation basis
Element::DIM = DIM;			// dimension
Element::VEC_NUM = 1;		// num of unknown variables in PDEs

DGSolution::DIM = Element::DIM;
DGSolution::VEC_NUM = Element::VEC_NUM;

Interpolation::DIM = DGSolution::DIM;
Interpolation::VEC_NUM = DGSolution::VEC_NUM;

DGSolution::ind_var_vec = { 0 };
DGAdapt::indicator_var_adapt = { 0 };

Element::is_intp.resize(Element::VEC_NUM);
for (size_t num = 0; num < Element::VEC_NUM; num++)
{ Element::is_intp[num] = std::vector<bool>(Element::DIM, true); }

// constant variable
int NMAX = 4;
int N_init = NMAX;	
int is_sparse = 1;
const std::string boundary_type = "period";
double final_time = 1.0;
const double cfl = 0.1;
const bool is_adapt_find_ptr_alpt = true;	// variable control if need to adaptively find out pointers related to Alpert basis in DG operators
const bool is_adapt_find_ptr_intp = false;	// variable control if need to adaptively find out pointers related to interpolation basis in DG operators

// adaptive parameter
// if need to test code without adaptive, just set refine_eps large number 1e6, then no refine
// and set coarsen_eta negative number -1, then no coarsen
const double refine_eps = 1e10;
// const double coarsen_eta = refine_eps/10.;
const double coarsen_eta = -1;

OptionsParser args(argc, argv);
args.AddOption(&NMAX, "-N", "--max-mesh-level", "Maximum mesh level");
args.AddOption(&is_sparse, "-s", "--sparse-grid", "sparse grid (1) or full grid (0)");
args.AddOption(&final_time, "-tf", "--final-time", "Final time; start time is 0.");
args.Parse();
if (!args.Good())
{
	args.PrintUsage(std::cout);
	return 1;
}
args.PrintOptions(std::cout);

N_init = NMAX;
bool sparse = (is_sparse == 1) ? true : false;

// hash key
Hash hash;

LagrBasis::set_interp_msh01();
HermBasis::set_interp_msh01();

AllBasis<AlptBasis> all_bas_alpt(NMAX);
AllBasis<LagrBasis> all_bas_lagr(NMAX);
AllBasis<HermBasis> all_bas_herm(NMAX);	

// operator matrix
OperatorMatrix1D<AlptBasis,AlptBasis> oper_matx(all_bas_alpt, all_bas_alpt, boundary_type);
OperatorMatrix1D<HermBasis, HermBasis> oper_matx_herm_herm(all_bas_herm, all_bas_herm, boundary_type);
OperatorMatrix1D<LagrBasis, LagrBasis> oper_matx_lagr_lagr(all_bas_lagr, all_bas_lagr, boundary_type);
\end{lstlisting}

Part 2 of the code is to project the initial condition to the sparse grid DG space. Here, although the initial function is low rank,  we still use the function \verb|DGAdaptIntp::init_adaptive_intp()|. for a general solution. The first step in this function is to do adaptive Lagrange interpolation and update coefficients of interpolation basis (\verb|Element::ucoe_intp| in DG solution). The second step is to transform coefficients of Lagrange basis to those of Alpert basis where the fast matrix-vector multiplication is applied.
\begin{lstlisting}	
// --- Part 2: initialization of DG solution ---
DGAdaptIntp dg_solu(sparse, N_init, NMAX, all_bas_alpt, all_bas_lagr, all_bas_herm, hash, refine_eps, coarsen_eta, is_adapt_find_ptr_alpt, is_adapt_find_ptr_intp, oper_matx_lagr_lagr, oper_matx_herm_herm);

// initial condition
// u(x,0) = cos(2 * pi * (sum_(d=1)^DIM x_d)
auto init_non_separable_func = [=](std::vector<double> x, int i)
{	
	double sum_x = 0.;
	for (int d = 0; d < DIM; d++) { sum_x += x[d]; };

	return cos(2*Const::PI*sum_x);
};

dg_solu.init_adaptive_intp(init_non_separable_func);
\end{lstlisting}

After the initial condition is projected, we do the time evolution. First we assemble the matrix for the DG bilinear form using the function \verb|void HyperbolicAlpt::assemble_matrix_scalar()|. Then we use third-order SSP RK time integrator \cite{shu1988jcp} to do the time marching. 
\begin{lstlisting}
// --- Part 3: time evolution ---
// coefficients in the equation are all 1:
// u_t + \sum_(d=1)^DIM u_(x_d) = 0
const std::vector<double> hyperbolicConst(DIM, 1.);

const int max_mesh = dg_solu.max_mesh_level();
const double dx = 1./pow(2., max_mesh);
double dt = dx * cfl / DIM;
int total_time_step = ceil(final_time/dt) + 1;
dt = final_time/total_time_step;

HyperbolicAlpt dg_operator(dg_solu, oper_matx);
dg_operator.assemble_matrix_scalar(hyperbolicConst);

RK3SSP odesolver(dg_operator, dt);
odesolver.init();

std::cout << "--- evolution started ---" << std::endl;
// record code running time
Timer record_time;
double curr_time = 0;
for (size_t num_time_step = 0; num_time_step < total_time_step; num_time_step++)
{	
	odesolver.step_rk();
	curr_time += dt;
	
	// record code running time
	if (num_time_step %
	{		
		std::cout << "num of time steps: " << num_time_step 
				<< "; time step size: " << dt 
				<< "; curr time: " << curr_time
				<< "; DoF: " << dg_solu.get_dof()
				<< std::endl;
		record_time.time("elasped time in evolution");
	}
}	
odesolver.final();
\end{lstlisting}

The final part is to compute the $L^2$ error between the numerical solution and the exact solution. Since the evaluation of the error using the Gaussian quadrature in high dimension is too costly, we again use adaptive interpolation to approximate the exact solution at final time, then transform them to the coefficients of Alpert basis.
\begin{lstlisting}
// --- Part 4: calculate error between numerical solution and exact solution ---
std::cout << "calculating error at final time" << std::endl;
record_time.reset();

// compute the error using adaptive interpolation
{
	// construct anther DGsolution v_h and use adaptive Lagrange interpolation to approximate the exact solution
	const double refine_eps_ext = 1e-6;
	const double coarsen_eta_ext = -1; 
	OperatorMatrix1D<HermBasis, HermBasis> oper_matx_herm_herm(all_bas_herm, all_bas_herm, boundary_type);
	OperatorMatrix1D<LagrBasis, LagrBasis> oper_matx_lagr_lagr(all_bas_lagr, all_bas_lagr, boundary_type);
	
	DGAdaptIntp dg_solu_ext(sparse, N_init, NMAX, all_bas_alpt, all_bas_lagr, all_bas_herm, hash, refine_eps_ext, coarsen_eta_ext, is_adapt_find_ptr_alpt, is_adapt_find_ptr_intp, oper_matx_lagr_lagr, oper_matx_herm_herm);

	auto final_func = [=](std::vector<double> x, int i) 
	{	
		double sum_x = 0.;
		for (int d = 0; d < DIM; d++) { sum_x += x[d]; };
		return cos(2.*Const::PI*(sum_x - DIM * final_time));
	};
	dg_solu_ext.init_adaptive_intp(final_func);

	// compute L2 error between u_h (numerical solution) and v_h (interpolation to exact solution)
	double err_l2 = dg_solu_ext.get_L2_error_split_adaptive_intp_scalar(dg_solu);		
	std::cout << "L2 error at final time: " << err_l2 << std::endl;	
}			
\end{lstlisting}

We report the convergence study of the sparse grid method at $t=1$ in Table \ref{tb:linear-hyperbolic}, including the L2 errors and the associated orders of accuracy. It is observed that the sparse grid method has about half-order reduction from the optimal $(k + 1)$-th order for high-dimensional computations, which is expected from our analysis \cite{guo2016transport}.

To further demonstrate the efficiency of the sparse grid algorithm, we report the $L^2$ errors versus the average CPU cost per time step for $k = 1, 2$ and $d = 2, 3, 4$ in Fig. \ref{fig:linear-cpu-err}. It is observed that, to achieve a desired level of accuracy, the sparse grid DG method with a larger $k$ requires less CPU time as expected. Moreover, the CPU time is approximately proportional to the DOF. In addition, the slopes for all the lines in Fig. \ref{fig:linear-cpu-err} are approximately the same with different dimensions. This indicates that our method has potential in the computations in high dimensions.

\begin{table}[!hbp]
\centering
\caption{linear hyperbolic equation with constant coefficients, $L^2$-error and convergence order at $t=1$.}
\label{tb:linear-hyperbolic}
\begin{tabular}{c|c|c|c|c|c|c|c|c}
\hline
\multicolumn{1}{c|}{} &  \multicolumn{4}{c|}{$k=1$} &\multicolumn{4}{c}{$k=2$}\\
\hline
& $N$ & DOF & $L^2$-error & order & $N$ & DOF & $L^2$-error  & order \\
\hline
\multirow{5}{3em}{$d=2$}
& 5 & 448   & 1.59e-2 & -    & 4 & 432   & 1.50e-3 & - \\
& 6 & 1024  & 3.84e-3 & 2.05 & 5 & 1008  & 3.95e-4 & 1.93 \\
& 7 & 2304  & 9.80e-4 & 1.97 & 6 & 2304  & 3.54e-5 & 3.48 \\
& 8 & 5120  & 2.75e-4 & 1.84 & 7 & 5184  & 6.70e-6 & 2.40 \\
& 9 & 11264 & 7.33e-5 & 1.91 & 8 & 11520 & 7.10e-7 & 3.24 \\
\hline
\multirow{5}{3em}{$d=3$}
& 4 & 832   & 4.60e-1 & -	 & 4 & 2808   & 8.92e-3 & - \\
& 5 & 2176  & 1.48e-1 & 1.63 & 5 & 7344   & 1.66e-3 & 2.42 \\
& 6 & 5504  & 3.30e-2 & 2.16 & 6 & 18576  & 3.85e-4 & 2.11 \\
& 7 & 13568 & 7.55e-3 & 2.13 & 7 & 45792  & 4.29e-5 & 3.17 \\
& 8 & 32768 & 2.15e-3 & 1.82 & 8 & 110592 & 1.07e-5 & 2.00 \\
\hline
\multirow{5}{3em}{$d=4$}
& 5 & 8832   & 5.19e-1 & -	  & 4 & 15552  & 3.44e-2 & - \\
& 6 & 24320  & 2.12e-1 & 1.29 & 5 & 44712  & 6.08e-3 & 2.50 \\
& 7 & 64768  & 5.50e-2 & 1.94 & 6 & 123120 & 1.20e-3 & 2.35 \\
& 8 & 167936 & 1.24e-2 & 2.15 & 7 & 327888 & 2.77e-4 & 2.11 \\
& 9 & 425984 & 3.91e-3 & 1.66 & 8 & 850176 & 5.90e-5 & 2.23 \\
\hline
\end{tabular}
\end{table}

\begin{figure}
    \centering
    \includegraphics[width=0.7\textwidth]{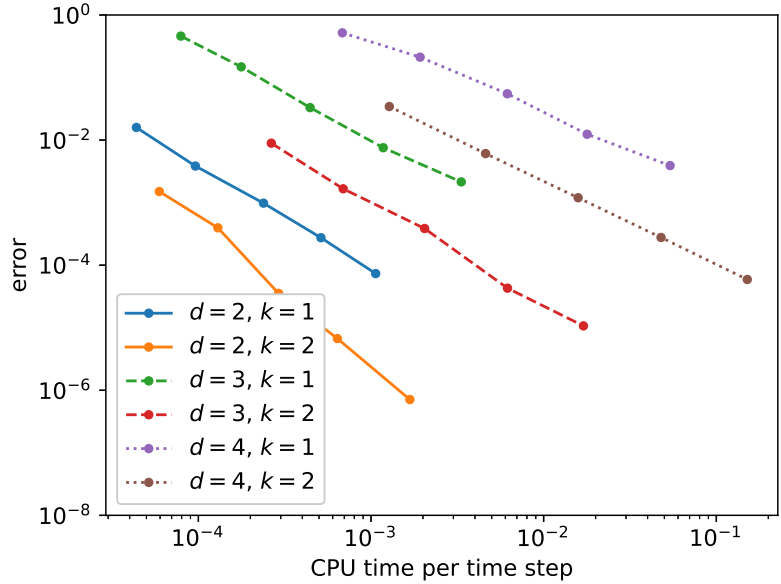}
    \caption{linear hyperbolic equation with constant coefficients. $L^2$-error versus CPU time per time step. Sparse grid DG. $t=1$}
    \label{fig:linear-cpu-err}
\end{figure}

\subsection{Hamilton-Jacobi equation}

In this example, we consider the following Hamilton-Jacobi-Bellman (HJB) equation \cite{bokanowski2013adaptive}
\begin{equation}\label{ex:HJB_ex}
\left\{\begin{array}{l}\displaystyle
\phi_t +\max_{\bb\in\mathcal{B}}\left(\sum_{m=1}^d b_m\cdot\nabla\phi\right) = 0,\quad \bx\in[0,1]^d,\\
\phi(\bx, 0)=g(\|\bx-\ba\|),
\end{array}\right.
\end{equation}	
where $\ba=(0.5,0.5,\ldots,0.5)$ and $\mathcal{B} = \{\bb=(b_1,b_2,\ldots,b_d),\,b_m=\pm1,\,m=1,\cdots,d \} $ is a set of $2^d$ vectors corresponding to $2^d$ possible controls. 
The function $g(z)$ is given by
\begin{equation}
	g(z) = \frac{1}{r_0}(z^2 - r_0^2),
\end{equation}
with $r_0 = \frac{1}{8}$.
Note that this HJB equation is equivalent to the following HJ equation
\begin{equation}
\label{ex:HJ_ex}
\left\{\begin{array}{l}\displaystyle
\phi_t + \sum_{m=1}^d|\phi_{x_m}| = 0,\quad \bx\in[0,1]^d\\
\phi(\bx, 0)=g(\|\bx-\ba\|),
\end{array}\right.
\end{equation}	
The exact solution can be hence derived from \eqref{ex:HJ_ex}:
$$
\phi(\bx, t)=g(\|(\bx-\ba))_t^\star\|).
$$ 
Here, for a vector $\mathbf{c}$, $\mathbf{c}_t^\star: = \min (\max (0, \mathbf{c}-t), \mathbf{c}+t)$ in the component-wise sense.
We apply the adaptive algorithm to simulate \eqref{ex:HJ_ex}. The outflow boundary conditions are imposed. Note that the Hamiltonian is nonsmooth and in \cite{guo2021adaptive} we regularize the absolute function to ensure stability:
\begin{equation}
\label{eikonal_regu}
\tilde H(\nabla\phi) = \begin{cases}
H(\nabla\phi), &\text{if } \|\nabla\phi\| \ge \delta  \\
\frac{1}{2\delta} H(\nabla\phi)^2 + \frac12\delta, &\text{otherwise}.
\end{cases}
\end{equation}
It can be easily verified that $\tilde H$ is $C^1$. In the simulation, we choose $\delta = 2h$, where $h$ is the mesh size, and hence the regularization will not affect the accuracy of the original method.

This numerical test has been already examined in \cite{guo2021adaptive} by a local DG (LDG) method and here, we only present the sample code and focus on the CPU time study. The   code is included in the GitHub repository \verb|./example/04_hamilton_jacobi_adapt_hjb.cpp|. Due to the page limit, we will only present the time evolution code. 

The first part in the time evolution is to compute the time step size based on the finest mesh size in all the dimensions. Note that we use adaptive finite element space, so the finest mesh size is changing during the time evolution. The second part is to use the Euler forward time stepping to predict the numerical solution in the next time step. Here we use the LDG method in \cite{yan2011local} to reconstruct the first-order derivatives of $\phi$, i.e., $\phi_{x_m}$, $m = 1, \cdots, d$. In particular, the LDG method computes two auxiliary variables in each dimension, which approximate the first-order derivatives with opposite one-sided numerical fluxes. Therefore, there are totally $(2d+1)$ unknown variables in the scheme including the unknown $\phi$ and the other $2d$ auxiliary variables. For the $2d$ auxiliary variables, the LDG bilinear forms are linear, and we evolve them by assembling the matrices stored in \verb|grad_linear| in the code. For the evolution of $\phi$, we first use Lagrangian interpolation basis to approximate the nonlinear Hamiltonian \verb|nonlinear_Lagr_fast| and then apply fast matrix-vector multiplication.  Since the assembling matrix is independent for each auxiliary variable, we use OpenMP here to improve the computational efficiency. After this, we call the function  and then \verb|rhs_nonlinear| to evaluate the residual (i.e. the right-hand-side of the weak formulation of $\phi$ and then call the time integrator to update the solution $\phi$ in the next time step.

The third part is to refine the numerical solution based on the prediction. After this refinement, the fourth part is to do the time evolution using the third-order SSP RK method \cite{shu1988jcp}. Note that this part is similar to the prediction part by calling the same function. The only difference is to use \verb|RK3SSP| instead of \verb|ForwardEuler|. In the end of the time evolution, we do coarsening to remove the redundant elements.

\begin{lstlisting}
while (curr_time < final_time)
{

	auto start_evolution_time = std::chrono::high_resolution_clock::now();

	// --- part 1: calculate time step dt ---
	const std::vector<int> & max_mesh = dg_solu.max_mesh_level_vec();

	// dt = cfl/(c1/dx1 + c2/dx2 + ... + c_dim/dx_dim)
	double sum_c_dx = 0.;	// this variable stores (c1/dx1 + c2/dx2 + ... + c_dim/dx_dim)
	for (size_t d = 0; d < DIM; d++)
	{
		sum_c_dx += std::pow(2., max_mesh[d]);
	}
	double dt = cfl / sum_c_dx;
	dt = std::min(dt, final_time - curr_time);

	// --- part 2: predict by Euler forward ---
	{
		std::vector<HJOutflowAlpt> grad_linear(2 * DIM, HJOutflowAlpt(dg_solu, oper_matx_alpt, oper_matx_alpt_inside, 1));

		omp_set_num_threads(2 * DIM);
#pragma omp parallel for
		for (int d = 0; d < 2 * DIM; ++d) // 2 * DIM matrices for computing the gradient of phi via LDG
		{
			int sign = 2 * (d %
			int dd = d / 2;
			grad_linear[d].assemble_matrix_flx_scalar(dd, sign, -1);
			grad_linear[d].assemble_matrix_vol_scalar(dd, -1);
		}
		HamiltonJacobiLDG HJ(grad_linear, DIM);

		// before Euler forward, copy Element::ucoe_alpt to Element::ucoe_alpt_predict
		dg_solu.copy_ucoe_to_predict();

		ForwardEuler odeSolver(dg_solu, dt);
		odeSolver.init_HJ(HJ);

		dg_solu.set_rhs_zero();

		odeSolver.compute_gradient_HJ(HJ); // compute the gradient of phi

		interp.nonlinear_Lagr_fast(LFHamiltonian, is_intp, fastintp); // interpolate the LFHamiltonian

		fastRHShj.rhs_nonlinear();

		odeSolver.rhs_to_eigenvec("HJ");

		odeSolver.step_stage(0);

		odeSolver.final_HJ(HJ);
	}

	// --- part 3: refine base on Element::ucoe_alpt
	dg_solu.refine();
	const int num_basis_refine = dg_solu.size_basis_alpt();

	// after refine, copy Element::ucoe_alpt_predict back to Element::ucoe_alpt
	dg_solu.copy_predict_to_ucoe();

	// --- part 4: time evolution
	std::vector<HJOutflowAlpt> grad_linear(2 * DIM, HJOutflowAlpt(dg_solu, oper_matx_alpt, oper_matx_alpt_inside, 1));
	omp_set_num_threads(2 * DIM);
#pragma omp parallel for
	for (int d = 0; d < 2 * DIM; ++d) // 2 * DIM matrices for computing the gradient of phi via LDG
	{
		int sign = 2 * (d %
		int dd = d / 2;
		grad_linear[d].assemble_matrix_flx_scalar(dd, sign, -1);
		grad_linear[d].assemble_matrix_vol_scalar(dd, -1);
	}
	HamiltonJacobiLDG HJ(grad_linear, DIM);

	RK3SSP odeSolver(dg_solu, dt);
	odeSolver.init_HJ(HJ);

	for (int stage = 0; stage < odeSolver.num_stage; ++stage)
	{
		dg_solu.set_rhs_zero();

		odeSolver.compute_gradient_HJ(HJ); // compute the gradient of phi

		interp.nonlinear_Lagr_fast(LFHamiltonian, is_intp, fastintp); // interpolate the LFHamiltonian
		
		fastRHShj.rhs_nonlinear();

		odeSolver.rhs_to_eigenvec("HJ");

		odeSolver.step_stage(stage);

		odeSolver.final_HJ(HJ);
	}

	// --- part 5: coarsen
	dg_solu.coarsen();
	const int num_basis_coarsen = dg_solu.size_basis_alpt();

	curr_time += dt;
}	
\end{lstlisting}

In this example, the viscosity solution is $C^1$, a rarefaction wave opens up at the center of the domain, which is well captured by the adaptive sparse method, see the solution profile in \cite{guo2021adaptive}. In Table \ref{tb:hjb}, we summarize the convergence study for $d = 2, 3, 4$ and $k = 1, 2$. Note that when $\epsilon=10^{-6}$, the error does not decay, and the reason is that the error has saturated already with the maximum level $N = 7$. In Figure \ref{fig:hj-cpu-err}, we plot the $L^2$ errors versus the average CPU cost per time step. It is again observed that, the errors with larger polynomial degree $k$ decays faster when reducing the refinement threshold $\epsilon$ and thus increasing DOF. The slopes with the same polynomial degrees in Figure \ref{fig:hj-cpu-err} are almost the same with different dimensions. This again domenstrates the capability of our method in high dimensions.

\begin{table}[!hbp]
	\centering
	\caption{Hamilton-Jacobi equation, $d=2,\,3,\,4$, $k=1,\,2$. Adaptive sparse grid. $T=0.1$ and the maximum mesh level $N=7$.}
	\label{tb:hjb}
	\begin{tabular}{c|c|c|c|c|c|c|c|c|c}
		\hline
		\multicolumn{2}{c|}{} &  \multicolumn{4}{c|}{$k=1$} &\multicolumn{4}{c}{$k=2$}\\
		\hline
		& $\epsilon$ & DOF & L$^2$-error  & $R_{\epsilon}$ & $R_{\textrm{DOF}}$& DOF & L$^2$-error  & $R_{\epsilon}$ & $R_{\textrm{DOF}}$\\
		\hline
		\multirow{4}{3em}{$d=2$}
&	1e-3	&	204	&	4.17e-3	&	-	&	-	&	63	&	8.62e-3	&	-	&	-	\\
&	1e-4	&	444	&	1.62e-3	&	0.41	&	1.21	&	135	&	1.89e-3	&	0.66	&	1.99	\\
&	1e-5	&	860	&	7.01e-4	&	0.36	&	1.27	&	207	&	6.26e-4	&	0.48	&	2.59	\\
&	1e-6	&	876	&	6.43e-4	&	0.04	&	4.69	&	459	&	4.24e-4	&	0.17	&	0.49	\\
		\hline
		\multirow{4}{3em}{$d=3$}
&	1e-3	&	608	&	5.44e-3	&	-	&	-	&	270	&	1.12e-2	&	-	&	-	\\
&	1e-4	&	1328	&	2.12e-3	&	0.41	&	1.20	&	594	&	2.98e-3	&	0.58	&	1.68	\\
&	1e-5	&	2576	&	9.15e-4	&	0.37	&	1.27	&	918	&	7.89e-4	&	0.58	&	3.05	\\
&	1e-6	&	2624	&	8.39e-4	&	0.04	&	4.74	&	2052	&	4.36e-4	&	0.26	&	0.74	\\
		\hline
		\multirow{4}{3em}{$d=4$} 
&	1e-3	&	1616	&	6.65e-3	&	-	&	-	&	1053	&	1.35e-2	&	-	&	-	\\
&	1e-4	&	3536	&	2.60e-3	&	0.41	&	1.20	&	1701	&	6.17e-3	&	0.34	&	1.63	\\
&	1e-5	&	6864	&	1.12e-3	&	0.37	&	1.27	&	2997	&	1.24e-3	&	0.70	&	2.84	\\
&	1e-6	&	6992	&	1.02e-3	&	0.04	&	4.69	&	6237	&	4.90e-4	&	0.40	&	1.26	\\
		\hline
	\end{tabular}
\end{table}	

\begin{figure}
    \centering
    \includegraphics[width=0.7\textwidth]{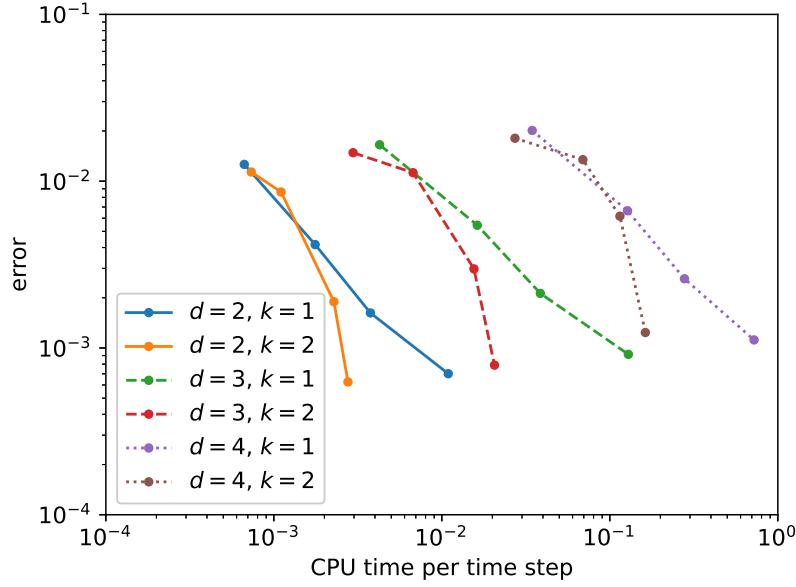}
    \caption{Hamilton-Jacobi equation. $L^2$-error versus CPU time per time step. Sparse grid DG. $t=1$}
    \label{fig:hj-cpu-err}
\end{figure}

\subsection{Wave equation}

In this example, we consider the isotropic wave propagation in heterogeneous media \cite{chou2014optimal} in the domain $[0,1]^d$ with periodic boundary conditions:
\begin{equation}\label{eq-wave}
u_{tt} =  \nabla \cdot (c^2(\bx) \nabla u ).
\end{equation}

For 2D case, the domain $\Omega = [0, 1]^2$ is composed of two subdomains $\Omega_1 = [\frac{1}{4}, \frac{3}{4}] \times [0, 1]$ and $\Omega_2 = \Omega \backslash \Omega_1$. The coefficient $c^2$ is a constant in each subdomain:
\begin{align}\label{example3_2_new}
c^2=
\begin{cases}
1, \qquad \text{in} \quad \Omega_1, \\
{\frac{5}{37}}, \qquad \text{in} \quad \Omega_2&
\end{cases}
\end{align}
With this setup, the exact solution is a standing wave
\begin{align}\label{example3_3_new}
u= 
\begin{cases}
\sin(\sqrt{20}\pi t)\cos(4\pi x_1) \cos(2\pi x_2 ), \qquad \text{in} \quad \Omega_1, \\
\sin(\sqrt{20}\pi t)\cos(12\pi x_1 )\cos(2\pi x_2), \qquad \text{in} \quad \Omega_2.
\end{cases}
\end{align}

For 3D case, $\Omega_1 = [\frac{1}{4}, \frac{3}{4}] \times [0, 1] \times [0, 1]$ and $\Omega_2 = \Omega \backslash \Omega_1$
\begin{align}\label{example3_2_3d}
c^2=
\begin{cases}
1, \qquad \text{in} \quad \Omega_1, \\
{\frac{3}{19}}, \qquad \text{in} \quad \Omega_2.
\end{cases}
\end{align}
In this case, the exact solution is a standing wave
\begin{align}\label{example3_3_3d}
u= 
\begin{cases}
\sin(\sqrt{24}\pi t)\cos(4\pi x_1) \cos(2\pi x_2 ) \cos(2\pi x_3), \qquad \text{in} \quad \Omega_1, \\
\sin(\sqrt{24}\pi t)\cos(12\pi x_1 )\cos(2\pi x_2) \cos(2\pi x_3), \qquad \text{in} \quad \Omega_2.
\end{cases}
\end{align}

This numerical example has been presented in \cite{huang2020adaptive}. Here, we show the sample code and focus more on the CPU cost study.

 The main code is included in the GitHub repository \verb|./example/03_wave_02_heter_media.cpp|. Since the refinement and coarsening procedures are the same as the Hamilton-Jacobi equation, we only present the time evolution part. Here, we use the interior penalty DG \cite{arnold1982interior} for the spatial discretization. In the weak formulation, we assemble the matrix for the linear part $-\frac{\sigma}{h}\sum_{i,j} [u]_{i+\half,j+\half} [v]_{i+\half,j+\half}$ where $\sigma$ is the penalty constant, $h$ is the mesh size, $u$ is the solution and $v$ is the test function. Then, we use Lagrangian interpolation to approximate $c^2 u_x$, $c^2 u_y$, $(c^2)^- u$ and $(c^2)^+ u$ and apply fast matrix-vector multiplication to evaluate the residual terms.
\begin{lstlisting}
// --- part 4: time evolution
// -sigma/h * [u] * [v]
DiffusionAlpt operator_ujp_vjp(dg_solu, oper_matx_alpt, sigma_ipdg);
operator_ujp_vjp.assemble_matrix_flx_ujp_vjp();	

dg_solu.set_source_zero();

RK4ODE2nd odesolver(operator_ujp_vjp, dt);
odesolver.init();

for (size_t stage = 0; stage < odesolver.num_stage; stage++)
{		
	dg_solu.set_rhs_zero();

	// interpolation of k * u_x and k * u_y
	interp.var_coeff_gradu_Lagr_fast(coe_func, is_intp, fastLagr);
	
	diffuseRHS.rhs_flx_gradu();
	diffuseRHS.rhs_vol();

	// interpolation of k- * u
	interp.var_coeff_u_Lagr_fast(coe_func_minus, is_intp_d0, fastLagr);
	diffuseRHS.rhs_flx_k_minus_u();

	// interpolation of k+ * u
	interp.var_coeff_u_Lagr_fast(coe_func_plus, is_intp_d0, fastLagr);
	diffuseRHS.rhs_flx_k_plus_u();
	
	odesolver.rhs_to_eigenvec();
	
	// [u] * [v]
	odesolver.add_rhs_matrix(operator_ujp_vjp);			
	
	odesolver.step_stage(stage);

	odesolver.final();
}	
\end{lstlisting}

In Table \ref{tb:dis-coeff-adaptive}, we show the convergence study for $d = 2, 3$ and $k = 1, 2, 3$. For $d=2,3$ with $k=1$, the errors saturate because we use   $\epsilon=10^{-5}$. Moreover, to reach the same magnitude of error, the DOF with higher polynomial degree will be much less that that with lower polynomial degree. In Figure \ref{fig:wave-cpu-err}, we plot the $L^2$ errors versus the average CPU cost per time step. It is again observed that, the errors with larger polynomial degree $k$ decays faster when reducing the refinement threshold $\epsilon$ and thus increasing DOF. The slopes with the same polynomial degrees in Figure \ref{fig:wave-cpu-err} are almost the same with different dimensions. This indicates that the proposed method is resistant to the curse of dimensionality.

\begin{table}[!hbp]
\centering
\caption{wave equation with $d=2,3$ and $k=1,2,3$. Adaptive sparse grid DG. $N=8$, $t=0.01$.}
\label{tb:dis-coeff-adaptive}
\begin{tabular}{c|c|c|c|c|c|c|c|c|c}
\hline
\multicolumn{2}{c|}{} & \multicolumn{4}{c|}{$d=2$} & \multicolumn{4}{c}{$d=3$} \\
\hline
& $\epsilon$ & DOF & $L^2$-error  & $R_{\epsilon}$ & $R_{\textrm{DOF}}$& DOF & $L^2$-error  & $R_{\epsilon}$ & $R_{\textrm{DOF}}$ \\
\hline
\multirow{5}{3em}{$k=1$}
& 1e-1	&	784		&	1.97e-3	&	-		&	-		&	3840	&	4.96e-3	&	-	&	-	\\
& 1e-2	&	2816	&	4.17e-4	&	0.68	&	1.22	&	16832	&	1.24e-3	&	0.60	&	0.94	\\
& 1e-3	&	8496	&	7.31e-5	&	0.76	&	1.58	&	70656	&	1.37e-4	&	0.96	&	1.54	\\
& 1e-4	&	24384	&	4.18e-5	&	0.24	&	0.53	&	249664	&	3.86e-5	&	0.55	&	1.00	\\
& 1e-5	&	56960	&	4.10e-5	&	0.01	&	0.02	&	862528	&	3.19e-5	&	0.08	&	0.15	\\
\hline
\multirow{5}{3em}{$k=2$}
& 1e-1	&	3204	&	7.72e-3	&	-		&	-		&	5238	&	1.12e-3	&	-	&	-	\\
& 1e-2	&	13842	&	1.17e-3	&	0.82	&	1.29	&	16794	&	1.71e-4	&	0.82	&	1.61	\\
& 1e-3	&	23544	&	1.12e-4	&	1.02	&	4.42	&	43416	&	3.33e-5	&	0.71	&	1.72	\\
& 1e-4	&	40320	&	1.39e-5	&	0.91	&	3.88	&	117558	&	5.32e-6	&	0.80	&	1.84	\\
& 1e-5	&	75042	&	6.32e-6	&	0.34	&	1.26	&	691632	&	1.10e-6	&	0.69	&	1.73	\\
\hline
\multirow{5}{3em}{$k=3$}
& 1e-1	&	1472	&	3.29e-3	&	-	    &	-	 	& 2304 		& 5.59e-4 	& - & -   \\ 
& 1e-2 	&	3904	&	8.00e-5	&	0.61	&	1.45 	& 7040 		& 1.28e-4 	& 0.64 & 1.32 	\\ 
& 1e-3 	&	8256	&	7.71e-5	&	1.02	&	3.13 	& 18176 	& 1.65e-5 	& 0.89 & 2.16 	\\
& 1e-4 	&	23488	&	1.47e-5	&	0.72	&	1.58 	& 41472 	& 1.55e-6 	& 1.03 & 2.87 	\\
& 1e-5 	&   39360	&	1.15e-6 &	1.11	&	4.94	& 92928		& 2.58e-7	& 0.78 & 2.22 	\\
\hline
\end{tabular}
\end{table}

\begin{figure}
    \centering
    \includegraphics[width=0.7\textwidth]{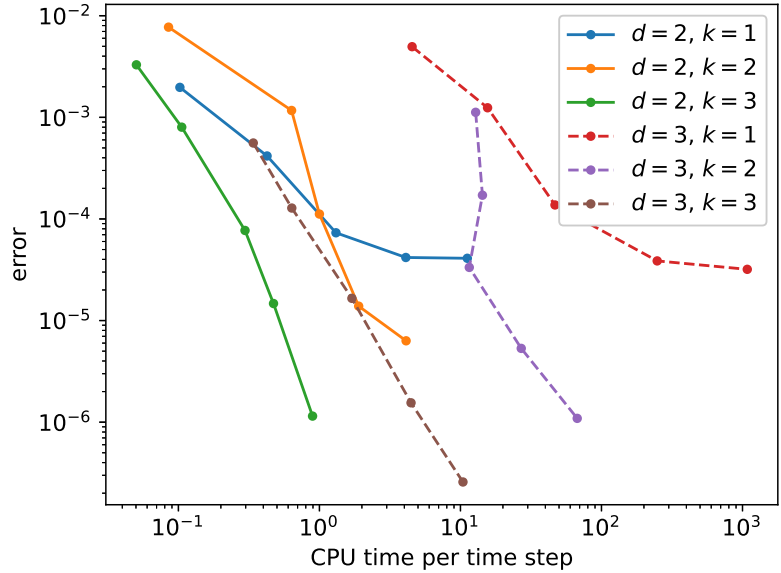}
    \caption{wave equation with $d=2,3$ and $k=1,2,3$. $L^2$-error versus CPU time per time step. Adaptive sparse grid DG. $N=8$, $t=0.01$.}
    \label{fig:wave-cpu-err}
\end{figure}

\section{Conclusions and  future work}
\label{sec:conclusion}

In this paper, we showcased the main components of our adaptive sparse grid DG C\texttt{++} package AdaM-DG for solving PDEs. We focused on the details of the implementation, including the data structure, assembling of the operators, fast algorithms, and further demonstrated how to implement the package by three examples. 

We would like to emphasize that this is still an on-going project, and many efforts are still needed to improve the software in various aspects. In particular, in the near term, we would like to improve some details of the code, including a more flexible definition of the computational domain, C\texttt{++} implementation to further improve the memory and CPU time efficiency, a  clear and comprehensive code documentation, a user-friendly Python interface, and a more efficient linear/nonlinear solver such as PETSc \cite{balay2019petsc}. More importantly, we would like to generalize the code to an HPC platform with efficient parallel implementations on multicore CPU (using MPI) and GPU (using CUDA). Many aspects of the computational algorithms can also be further developed, including the hybridization with other numerical discretizations and a multi-domain approach which is more friendly to heterogeneous computing architecture.

\section*{Acknowledgements}
We acknowledge the High Performance Computing Center (HPCC) at Michigan State University for providing computational resources that have contributed to the research results reported within this paper.
We also would like to thank Kai Huang from Michigan State University, Yuan Liu from Wichita State University, Zhanjing Tao from Jilin University, and Qi Tang from Los Alamos National Laboratory for the assistance and discussion in the code implementation.

\newpage
\bibliographystyle{abbrv}
\bibliography{ref,ref_cheng,ref_cheng_2}

\end{document}